\documentclass[12pt]{article}
\usepackage{amssymb,amstext}
\usepackage{amsmath,amsthm}
\usepackage[latin1]{inputenc} 
\usepackage[T1]{fontenc}      
\usepackage{graphicx}
\usepackage{eurosym}
\usepackage{color}

\usepackage[pdftex]{hyperref}

\usepackage{textcomp}         
\date{July 19, 2016}
\voffset-2cm
\hoffset-1.5cm
\textheight24cm
\textwidth16cm

\newtheorem{theorem}{Theorem}
\newtheorem{corollary}[theorem]{Corollary}

\newtheorem{lemma}[theorem]{Lemma}
\newtheorem{proposition}[theorem]{Proposition}
\newtheorem{definition}[theorem]{Definition}
\newtheorem{remark}[theorem]{Remark}
\newtheorem{assumption}[theorem]{Assumption}

\newtheorem{example}[theorem]{Example}
\newtheorem{specialcase}[theorem]{Special case}

%

\newcommand{\R}{\mathbb{R}}

\newcommand{\xad}{x_\alpha^\delta}
\newcommand{\xa}{x_\alpha}

\newcommand{\xdag}{x^\dagger}
\newcommand{\yd}{y^\delta}




\setlength{\parindent}{1.5em}
\parskip0.2cm

\setlength{\parindent}{1.5em}
\parskip0.2cm

\title{\bf Conditional stability versus ill-posedness for operator equations with monotone operators in Hilbert space}

\author{{\sc Radu Ioan Bo\c{t}}\thanks{Faculty of Mathematics, University of Vienna, Oskar-Morgenstern-Platz 1, A-1090 Vienna, {\sc Austria},  Email:$\;$\texttt{radu.bot\,@\,univie.ac.at}\,.} \,\,and\, {\sc Bernd Hofmann}\thanks{Faculty of Mathematics, Technische Universit\"at Chemnitz,  D-09107 Chemnitz,
{\sc Germany},  Email:$\;$\texttt{bernd.hofmann\,@\,mathematik.tu-chemnitz.de}\,.}}

\begin{document}

\maketitle

\vspace{-0.65cm}

\begin{abstract}
In the literature on singular perturbation (Lavrentiev regularization) for the stable approximate solution of operator equations with monotone operators in the Hilbert space the phenomena of conditional stability
and local well-posedness and ill-posedness are rarely investigated. Our goal is to present some studies which try to bridge this gap.
So we discuss the impact of conditional stability on error estimates and convergence rates for the Lavrentiev regularization and distinguish for linear problems well-posedness and ill-posedness in a specific manner motivated by a saturation result. The role of the regularization error in the noise-free case, called bias, is a crucial point in the paper for nonlinear and linear problems. In particular, for linear operator equations general convergence rates, including logarithmic rates, are derived by means of the method of approximate source conditions. This allows us to extend well-known convergence rates results for the Lavrentiev regularization that were based on general source conditions to the case of non-selfadjoint linear monotone forward operators for which general source conditions fail.
Examples presenting the self-adjoint multiplication operator as well as the non-selfadjoint fractional integral operator and Ces\`aro operator illustrate the theoretical results. Extensions to the nonlinear case under specific conditions on the nonlinearity structure complete the paper.
\end{abstract}

\vspace{0.3cm}

{\parindent0em {\bf MSC2010 subject classification:}
47A52}, 65F22, 47H05, 65J22, 65J15

\vspace{0.3cm}

{\parindent0em {\bf Keywords:}
Conditional stability, local well-posedness, local ill-posedness, Hilbert space, equations with monotone operators, linear and nonlinear operator equations, Lavrentiev regularization, Tikhonov regularization, convergence rates, H\"older and logarithmic rates, approximate source conditions, solution smoothness, multiplication operator, fractional integration operator,
Ces\`aro operator.

\section{Introduction}\label{s1}
\setcounter{equation}{0}
\setcounter{theorem}{0}

If $F: \mathcal{D}(F) \subseteq X \to Y$ denotes a sufficiently smooth and possibly nonlinear operator mapping between Hilbert spaces $X$ and $Y$ with norms $\|\cdot\|$, then it is not always trivial to find in a stable manner the solution $\xdag \in \mathcal{D}(F)$ to the operator equation
\begin{equation} \label{eq:nonlinopeq}
F(x)=y
\end{equation}
with the exact right-hand side $y=F(\xdag)$ when only noisy data $\yd$ obeying the deterministic noise model
\begin{equation} \label{eq:noise}
\|y-\yd\| \leq\delta
\end{equation}
with noise level $\delta>0$ are available. Even if (\ref{eq:nonlinopeq}) has $\xdag$ as the unique solution, a least squares approach $$\|F(x)-\yd\|^2 \to \min, \qquad \mbox{subject to}\quad x \in \mathcal{D}(F),$$ is not always successful if the Hilbert space is infinite dimensional.
Then the least squares minimizers need not exist and if they exist their convergence to $\xdag$ in the norm of $X$ as $\delta \to 0$ can only be expected if the operator equation is locally well-posed at $\xdag$.
In this context, we recall the following definition introduced in \cite[Definition~2]{HofSch94}.

\begin{definition}  \label{def:posed}
The equation (\ref{eq:nonlinopeq}) is called {\sl locally well-posed} at the solution point $\xdag \in \mathcal{D}(F)$ if there is a ball $\mathcal{B}_r(\xdag)$ with radius $r>0$ and center $\xdag$ such that for every sequence $\{x_k\}_{k=1}^\infty \subset \mathcal{B}_r(\xdag)\cap \mathcal{D}(F)$ the convergence of images $\lim \limits_{k \to \infty} \|F(x_k)-F(\xdag)\|=0$ implies the convergence of the preimages  $\lim \limits_{k \to \infty} \|x_k-\xdag\|=0$. Otherwise it is called {\sl locally ill-posed}.
\end{definition}

In particular if the equation (\ref{eq:nonlinopeq}) is a model of an inverse and therefore mostly ill-posed problem, it makes sense to exploit a singularly perturbed auxiliary problem to equation (\ref{eq:nonlinopeq}), which is automatically locally well-posed. The most prominent such approach is the Tikhonov regularization, where in the simplest case (cf.~\cite[Chapt.~10]{EHN96}) stable approximate solutions $\xad \in \mathcal{D}(F)$ solve the extremal problem
\begin{equation} \label{eq:Tik}
\|F(x)-\yd\|^2+\|x-\bar x\|^2 \to \min, \qquad \mbox{subject to} \quad x \in \mathcal{D}(F),
\end{equation}
with regularization parameter $\alpha>0$ and reference element (initial guess) $\bar x \in X$.
Variants of Tikhonov regularization, however, are also helpful and advantageous (cf., e.g., \cite{ChengHofLu14,ChengYam00} and \cite[\S 6.2]{HofYam10}) if (\ref{eq:nonlinopeq}) is locally well-posed in the sense that a {\sl conditional stability estimate} of the form
\begin{equation} \label{eq:condstab}
\|x-\xdag\| \le \varphi(\|F(x)-F(\xdag)\|) \qquad \mbox{for all} \quad x \in \mathcal{D}(F) \cap Q
\end{equation}
applies, with some set $Q \subset X$ containing $\xdag$ and some {\sl concave} index function $\varphi$, where we call
$\varphi:[0,\infty) \to [0,\infty)$ {\sl index function} if it is continuous, strictly increasing and satisfies the condition $\varphi(0)=0$. Then the method ensures convergence and rates of the approximate solutions
when the regularized solutions are embedded in the stability region $\mathcal{D}(F) \cap Q$.

The focus of this paper is on the specific situation of an operator equation (\ref{eq:nonlinopeq}) with \linebreak $Y=X,$ $\mathcal{D}(F)=X$, and {\sl monotone} operators $F$  as characterized by the following assumption.
Mostly, {\sl infinite dimensional} Hilbert spaces $X$ will be under consideration, but for examples also finite dimensional cases shall be exploited.

 \begin{assumption} \label{ass:ass0}
Consider the operator equation (\ref{eq:nonlinopeq}) with solution $\xdag \in X$  under the auspices that
\begin{enumerate}
\item[(a)] $X$ is a real separable Hilbert space with norm $\|\cdot\|$ and inner product $\langle \cdot,\cdot \rangle$ and
\item[(b)] $F: X \to X$ is a monotone operator, i.e.
\begin{equation}\label{eq:Fmon}
 \langle F(x)-F(\tilde{x}),x-\tilde{x}\rangle \geq 0 \qquad \mbox{for all} \quad x,\tilde{x}\in X,
 \end{equation}
which is moreover hemicontinuous and hence maximally monotone.
\end{enumerate}
\end{assumption}

Under Assumption~\ref{ass:ass0} there occur well-posed and ill-posed situations. The best situation of global well-posedness is characterized by {\sl strong monotonicity}
\begin{equation} \label{eq:Fmonstrongglob}
 \langle F(x)-F(\tilde x),x-\tilde x\rangle \geq C\,\|x-\tilde x\|^2 \qquad \mbox{for all} \quad x,\tilde x \in X,
\end{equation}
with some constant $C>0$, which implies the {\sl coercivity} condition
\begin{equation} \label{eq:coercive}
\lim \limits _{\|x\| \to \infty} \frac{\langle F(x),x \rangle}{\|x\|}=\infty.
\end{equation}

\begin{proposition} \label{pro:BrowderMinty}
Under the requirements of Assumption~\ref{ass:ass0} strengthened by the condition (\ref{eq:Fmonstrongglob}) the equation (\ref{eq:nonlinopeq}) is uniquely solvable  in $X$ for all $y \in X$, and
the solutions are Lipschitz continuous with respect to the data, i.e. the inverse operator $F^{-1}: X \to X$ is well-defined with
\begin{equation} \label{eq:ideal}
\|F^{-1}(y)-F^{-1}(\tilde y)\| \le \frac{1}{C}\,\|y-\tilde y\| \qquad \mbox{for all} \quad y,\tilde y \in X.
\end{equation}
\end{proposition}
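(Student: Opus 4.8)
The plan is to split the statement into two parts of very unequal difficulty: the Lipschitz estimate \eqref{eq:ideal} together with uniqueness, which is an elementary consequence of the strong monotonicity \eqref{eq:Fmonstrongglob}, and the existence of a solution for every right-hand side (i.e.\ surjectivity of $F$), which is the substantial content. First I would dispose of stability and uniqueness simultaneously. If $x,\tilde x\in X$ satisfy $F(x)=y$ and $F(\tilde x)=\tilde y$, then combining \eqref{eq:Fmonstrongglob} with the Cauchy--Schwarz inequality gives
\[
C\,\|x-\tilde x\|^2\le\langle F(x)-F(\tilde x),\,x-\tilde x\rangle=\langle y-\tilde y,\,x-\tilde x\rangle\le\|y-\tilde y\|\,\|x-\tilde x\|.
\]
Dividing by $\|x-\tilde x\|$ in the case $x\neq\tilde x$ yields $\|x-\tilde x\|\le\frac1C\|y-\tilde y\|$. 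In particular $y=\tilde y$ forces $x=\tilde x$, so solutions are unique; and once surjectivity is available this is exactly \eqref{eq:ideal} upon writing $x=F^{-1}(y)$, $\tilde x=F^{-1}(\tilde y)$.

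The crux is therefore surjectivity: for every $y\in X$ the equation $F(x)=y$ must possess a solution. Here I would invoke the Browder--Minty surjectivity theorem for maximal monotone operators on reflexive Banach spaces, namely that a maximal monotone and coercive operator is onto. Its hypotheses are met under the strengthened Assumption~\ref{ass:ass0}: the Hilbert space $X$ is reflexive and self-dual, so $F:X\to X$ may be read as a map into the dual; $F$ is maximal monotone by Assumption~\ref{ass:ass0}(b); and the coercivity \eqref{eq:coercive} follows from \eqref{eq:Fmonstrongglob} by choosing $\tilde x=0$, since then $\langle F(x),x\rangle\ge C\|x\|^2+\langle F(0),x\rangle\ge C\|x\|^2-\|F(0)\|\,\|x\|$, whence $\langle F(x),x\rangle/\|x\|\ge C\|x\|-\|F(0)\|\to\infty$. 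Surjectivity delivers existence for each $y$, and together with the uniqueness above it makes $F^{-1}:X\to X$ a well-defined single-valued mapping, completing the argument.

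The main obstacle is precisely this surjectivity step. It is the deep part of the Minty--Browder theory and cannot be reached by an elementary Banach fixed-point (contraction) argument of the type $x\mapsto x-t\,(F(x)-y)$, because $F$ is assumed only hemicontinuous rather than Lipschitz continuous, so no contraction constant is at hand. A self-contained route would instead approximate $F$ on an increasing family of finite-dimensional subspaces, solve the finite-dimensional equations (where coercivity plus Brouwer's fixed-point theorem guarantee solvability), extract a weakly convergent subsequence of the a priori bounded Galerkin solutions, and identify the weak limit as a genuine solution via monotonicity (the Minty trick) and hemicontinuity; but for the purposes of this paper it is cleanest to quote the surjectivity theorem directly and to derive the quantitative estimate \eqref{eq:ideal} by the short computation above.
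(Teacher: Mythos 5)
Your proposal is correct and follows essentially the same route as the paper: the Browder--Minty surjectivity theorem for existence, and the elementary chain $C\,\|x-\tilde x\|^2\le\langle F(x)-F(\tilde x),x-\tilde x\rangle\le\|F(x)-F(\tilde x)\|\,\|x-\tilde x\|$ for uniqueness and the Lipschitz bound (\ref{eq:ideal}). You are in fact slightly more thorough than the paper, since you explicitly verify the coercivity hypothesis from (\ref{eq:Fmonstrongglob}) inside the proof, whereas the paper relies on the implication (\ref{eq:Fmonstrongglob})$\,\Rightarrow\,$(\ref{eq:coercive}) stated beforehand.
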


\begin{proof}
The Browder-Minty theorem ensures under the supposed conditions that $F$ is surjective and due to (\ref{eq:Fmonstrongglob}) even bijective.
Also from (\ref{eq:Fmonstrongglob}) we have for all $x, \tilde x \in X$
$$C\,\|x-\tilde x\|^2 \le \langle F(x)-F(\tilde x),x-\tilde x\rangle \le \|x-\tilde x\|\,\| F(x)-F(\tilde x)\|,$$
which yields (\ref{eq:ideal}) and completes the proof.
\end{proof}

However, there are a lot of examples for inverse problems occurring in natural sciences, engineering, and finance, where estimates of the form (\ref{eq:ideal}) fail and operator equations (\ref{eq:nonlinopeq}) with monotone forward operators $F$ have to be solved in a stable approximate manner. Due to the smoothing character of $F$ in these cases, local ill-posedness  must be conjectured, and we have so-called operator equations of the first kind. Compact monotone operators $F$ are typical for that situation. Then $F$ obeys (\ref{eq:Fmon}), but fails to satisfy inequalities of the form (\ref{eq:Fmonstrongglob}). Examples of ill-posed problems in integral and
differential equations under monotone forward operators are, for example, presented in \cite[Section~1.3]{AlbRya06} and \cite[Section~5]{HKR16}.
 However, for all $\alpha>0$ the associated equations of the second kind $G(x)=y$ with $G(x):=F(x)+\alpha I$ are strongly monotone and hence locally well-posed everywhere, because we have
$$\langle G(x)-G(\tilde x),x-\tilde x\rangle =\langle F(x)-F(\tilde x),x-\tilde x\rangle + \alpha\,\|x-\tilde x\|^2 \geq \alpha\,\|x-\tilde x\|^2 \;\;\mbox{for all} \;\; x,\tilde{x}\in X.$$
This gives a substantial motivation for using singular perturbations
for the stable approximate solution of equation (\ref{eq:nonlinopeq}) also here. Due to the maximal monotonicity of $F$ the simpler
Lavrentiev regularization (cf.~the seminal monograph \cite{Lavrentiev67} as well as the more recent works \cite{AlbRya06,LiuNash96}) is applicable, where stable approximate solutions $\xad \in X$ solve the operator equation
\begin{equation} \label{eq:lav}
F(\xad)+\alpha(\xad-\bar{x})=\yd,
\end{equation}
with regularization parameter $\alpha>0$ and reference element $\bar x \in X$. Such approach is also helpful if Proposition~\ref{pro:BrowderMinty} is not applicable, because coercivity (\ref{eq:coercive}) fails or  well-posedness at $\xdag$ takes place only in a local sense.
The latter is the case if $F$ is {\sl strongly monotone} in a neighbourhood of $\xdag$, i.e.~the locally relaxed version of (\ref{eq:Fmonstrongglob}),
\begin{equation} \label{eq:Fmonstrongloc}
 \langle F(x)-F(\xdag),x-\xdag\rangle \geq C\,\|x-\xdag\|^2 \qquad \mbox{for all} \quad x \in \mathcal{B}_r(\xdag),
\end{equation}
with some radius $r>0$ and some constant $C>0$ is valid,
 or if $F$ is {\sl uniformly monotone} in a neighbourhood of $\xdag$, i.e.
\begin{equation} \label{eq:Fmongenloc}
 \langle F(x)-F(\xdag),x-\xdag\rangle \geq \zeta(\|x-\xdag\|) \qquad \mbox{for all} \quad x \in \mathcal{B}_r(\xdag)
\end{equation}
holds with some radius $r>0$ and some index function $\zeta$. In both situations we have local well-posedness at $\xdag$, and we refer for examples to the monograph \cite{Zeidler90} and also to papers like \cite{Bes10}.

\begin{proposition} \label{pro:condstab}
Let the inequality (\ref{eq:Fmongenloc}) hold with an index function $\zeta$ of the form $\zeta(t)=\theta(t)\,t,\;t>0,$ such that $\theta$ is a convex index function. Then the condition (\ref{eq:Fmongenloc}) of local uniform monotonicity is a conditional stability estimate of the form (\ref{eq:condstab}) with $\mathcal{D}(F)=X,\;Q=\mathcal{B}_r(\xdag),$ and the concave index function $\varphi(t)=\theta^{-1}(t),\;t>0$.
This implies that the operator equation (\ref{eq:nonlinopeq}) is locally well-posed at the solution point $\xdag$.
Evidently, these assertions apply for the local strong monotonicity (\ref{eq:Fmonstrongloc}) yielding (\ref{eq:condstab}) with $\varphi(t)=\frac{1}{C}\,t$.
\end{proposition}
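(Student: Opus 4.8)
The plan is to read off the conditional stability estimate directly from the local uniform monotonicity by pairing it with the Cauchy--Schwarz inequality, and then to check that the resulting function $\varphi=\theta^{-1}$ has all the properties demanded in (\ref{eq:condstab}).

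First I would fix an arbitrary $x\in\mathcal{B}_r(\xdag)$ with $x\neq\xdag$ and chain the monotonicity bound with Cauchy--Schwarz, using $\zeta(t)=\theta(t)\,t$:
$$\theta(\|x-\xdag\|)\,\|x-\xdag\|=\zeta(\|x-\xdag\|)\le\langle F(x)-F(\xdag),x-\xdag\rangle\le\|F(x)-F(\xdag)\|\,\|x-\xdag\|.$$
Dividing through by the strictly positive factor $\|x-\xdag\|$ yields $\theta(\|x-\xdag\|)\le\|F(x)-F(\xdag)\|$, while the case $x=\xdag$ is trivial since both sides of the target estimate vanish. Since $\theta$ is an index function it is continuous, strictly increasing and satisfies $\theta(0)=0$, hence it possesses a continuous, strictly increasing inverse $\theta^{-1}$ with $\theta^{-1}(0)=0$; that is, $\varphi:=\theta^{-1}$ is again an index function. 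Applying the increasing map $\varphi$ to both sides of the last inequality gives precisely (\ref{eq:condstab}) on $Q=\mathcal{B}_r(\xdag)$ with $\mathcal{D}(F)=X$.

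The one point that deserves a short separate argument---and the only place I would expect any subtlety---is the concavity of $\varphi$. The fact I would invoke is that the inverse of a convex, strictly increasing function vanishing at the origin is concave. For differentiable $\theta$ this is transparent from $(\theta^{-1})''=-\theta''/(\theta')^{3}\le0$, but it holds in general directly from convexity via the chord characterization: for $s,t\ge0$ and $\lambda\in[0,1]$, writing $s=\theta(u)$, $t=\theta(v)$ and using $\theta(\lambda u+(1-\lambda)v)\le\lambda s+(1-\lambda)t$ together with monotonicity of $\theta^{-1}$ gives $\lambda\,\theta^{-1}(s)+(1-\lambda)\,\theta^{-1}(t)\le\theta^{-1}(\lambda s+(1-\lambda)t)$. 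Hence $\varphi=\theta^{-1}$ is a concave index function, as required.

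Finally, local well-posedness in the sense of Definition~\ref{def:posed} follows immediately from the estimate just established: for any sequence $\{x_k\}\subset\mathcal{B}_r(\xdag)$ with $\|F(x_k)-F(\xdag)\|\to0$, continuity of $\varphi$ at $0$ and $\varphi(0)=0$ give $\|x_k-\xdag\|\le\varphi(\|F(x_k)-F(\xdag)\|)\to0$. The asserted special case of local strong monotonicity (\ref{eq:Fmonstrongloc}) is recovered by taking $\zeta(t)=C\,t^2$, i.e.\ $\theta(t)=C\,t$, which is a convex index function whose inverse is the concave index function $\varphi(t)=\theta^{-1}(t)=t/C$.
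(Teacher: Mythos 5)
Your proposal is correct and follows essentially the same route as the paper's proof: the chain $\zeta(\|x-\xdag\|)\le\langle F(x)-F(\xdag),x-\xdag\rangle\le\|F(x)-F(\xdag)\|\,\|x-\xdag\|$ via local uniform monotonicity and Cauchy--Schwarz, followed by division and inversion of $\theta$. You merely spell out details the paper leaves implicit -- the concavity of $\theta^{-1}$ via the chord argument, the trivial case $x=\xdag$, and the deduction of local well-posedness from the stability estimate -- all of which are sound.
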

\begin{proof}
For $x \in \mathcal{B}_r(\xdag)$ we can estimate from (\ref{eq:Fmongenloc}) as
$$\zeta(\|x-\xdag\|)=\theta(\|x-\xdag\|)\,\|x-\xdag\|\le  \langle F(x)-F(\xdag),x-\xdag\rangle \le  \|F(x)-F(\xdag)\|\,\|x-\xdag\|$$
and hence $\|x-\xdag\| \le \theta^{-1}(\|F(x)-F(\xdag)\|)$, where $\theta^{-1}$ is a concave index function which plays the role of $\varphi$ in (\ref{eq:condstab}).
The special case of local strong monotonicity (\ref{eq:Fmonstrongloc}) applies here with $\theta(t)=C\,t$.
\end{proof}

\begin{remark} \label{rem:convex}
{\rm In very specific cases, see Example~\ref{ex:power} below, the function $\theta$ in Proposition~\ref{pro:condstab} can also be concave such that $\theta^{-1}$ is a convex index function. Then, surprisingly,
with $0<\kappa<1$ the convergence rate (\ref{eq:Hrate}) in Corollary~\ref{cor:condstab} below can be overlinear as $\mathcal{O}(\delta^{1/\kappa})$.
}\end{remark}

A special case of (\ref{eq:nonlinopeq}) taking into account Assumption~\ref{ass:ass0} is characterized by forward operators $A \in \mathcal{L}(X)$ instead of $F$, where $\mathcal{L}(X)$ denotes the Banach space
of bounded linear operators $A:X \to X$ and $\|A\|$ indicates the corresponding operator norm.
So we consider in this case linear operator equations
\begin{equation} \label{eq:linopeq}
Ax=y
\end{equation}
under the noise model (\ref{eq:noise}), where $A$ is monotone (accretive), i.e.
\begin{equation}\label{eq:Amonotone}
 \langle Ax,x\rangle \geq 0 \qquad \mbox{for all} \quad x \in X.
\end{equation}
Note that for all such operators $A$ and all $\alpha>0$ the properties
\begin{equation} \label{eq:monprop}
(A+\alpha I)^{-1} \in \mathcal{L}(X),\qquad \|(A+\alpha I)^{-1}A\| \le 1,
\end{equation}
and
\begin{equation} \label{eq:linill0}
\|(A+\alpha I)^{-1}\| \le \frac{1}{\alpha}
\end{equation}
are valid (cf.~\cite[Section~7.1.1]{Haase06}. Regularized solutions $\xad$ of Lavrentiev regularization in the linear case attain  the explicit form
\begin{equation} \label{eq:Lavex}
\xad=(A+\alpha I)^{-1}(\yd + \alpha\,\bar x),
\end{equation}
because they solve the equation
\begin{equation} \label{eq:Lavlin}
A\xad+\alpha(\xad-\bar{x})=\yd.
\end{equation}

Since the properties of a linear operator $A \in \mathcal{L}(X)$ do not depend on the solution point $\xdag$, well-posedness and ill-posedness of the operator equation (\ref{eq:linopeq}) in the sense of Definition~\ref{def:posed} are global properties. Thus, the equation is locally well-posed everywhere or locally ill-posed everywhere as the following proposition outlines.

\begin{proposition} \label{pro:distinguish}
The linear operator equation (\ref{eq:linopeq}) is under (\ref{eq:Amonotone}) locally well-posed everywhere if and only if $A$ is continuously invertible, i.e.~if $A^{-1} \in \mathcal{L}(X)$ and we have a constant $K>0$
such that
\begin{equation} \label{eq:linwell}
\|(A+\alpha I)^{-1}\| \le K<\infty   \qquad \mbox{for all} \quad \alpha>0,
\end{equation}
where $K=\|A^{-1}\|$ holds true. Alternatively, (\ref{eq:Amonotone}) is locally ill-posed everywhere if and only if the nullspace of $A$ is non-trivial, i.e.~$\mathcal{N}(A)\ne\{0\}$, or the range $R(A)$ of $A$ is not closed.
Then we have
\begin{equation} \label{eq:linill}
\|(A+\alpha I)^{-1}\| = \frac{1}{\alpha}   \qquad \mbox{for all} \quad \alpha>0.
\end{equation}
\end{proposition}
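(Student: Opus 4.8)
The plan is to reduce all three assertions to the single property that $A$ is \emph{bounded below}. Since $A$ is linear and $\mathcal{D}(F)=X$, setting $h_k=x_k-\xdag$ turns the implication in Definition~\ref{def:posed} into the solution-independent requirement that $\|Ah_k\|\to 0$ force $\|h_k\|\to 0$ for all sequences $\{h_k\}$ in a ball about $0$. By homogeneity this holds for some radius $r>0$ if and only if it holds for every radius, and precisely when $A$ is bounded below, i.e. $\|Ax\|\ge c\,\|x\|$ for some $c>0$ and all $x\in X$: if no such $c$ exists one picks $\|h_k\|=1$ with $\|Ah_k\|\to 0$ and rescales into $\mathcal{B}_r(0)$ to defeat well-posedness. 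The logical negation of ``bounded below'' is exactly ``$A$ not injective or $R(A)$ not closed'', because $A$ bounded below is equivalent to $A$ being injective with closed range (the nontrivial direction being the bounded inverse theorem applied to $A:X\to R(A)$). This already produces the stated dichotomy between (\ref{eq:linwell}) and (\ref{eq:linill}).

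The decisive step, where monotonicity enters, is to upgrade ``bounded below'' to ``$A^{-1}\in\mathcal{L}(X)$'', i.e. to surjectivity. The trivial direction is immediate: $A^{-1}\in\mathcal{L}(X)$ gives $\|Ax\|\ge\|A^{-1}\|^{-1}\|x\|$. For the converse, assume $A$ is bounded below, so $R(A)$ is closed and it suffices to show $\mathcal{N}(A^*)=\{0\}$. Writing $A=B+C$ with $B=\tfrac12(A+A^*)$ and $C=\tfrac12(A-A^*)$ skew-symmetric, the monotonicity (\ref{eq:Amonotone}) is equivalent to $B\ge 0$. If $A^*z=0$ then $Bz=Cz$, whence $\langle Bz,z\rangle=\langle Cz,z\rangle=0$; since $B\ge 0$ this forces $B^{1/2}z=0$, so $Bz=0$, then $Cz=0$, hence $Az=0$ and $z=0$ by injectivity. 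Thus $R(A)^\perp=\mathcal{N}(A^*)=\{0\}$ and $A$ is onto, giving $A^{-1}\in\mathcal{L}(X)$. I expect this surjectivity argument to be the main obstacle, since for a general bounded-below operator the range may be a proper closed subspace; it is precisely the positivity of the symmetric part that excludes this.

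It remains to pin down the resolvent norms, and here I exploit that for every monotone $A$ one has $\|(A+\alpha I)x\|^2=\|Ax\|^2+2\alpha\langle Ax,x\rangle+\alpha^2\|x\|^2\ge\|Ax\|^2+\alpha^2\|x\|^2$. In the well-posed case I combine this with $\|Ax\|\ge\|A^{-1}\|^{-1}\|x\|$ to get $\|(A+\alpha I)x\|\ge\|A^{-1}\|^{-1}\|x\|$, hence $\|(A+\alpha I)^{-1}\|\le\|A^{-1}\|=:K$ for all $\alpha>0$, which is (\ref{eq:linwell}); that this $K$ is sharp follows from $(A+\alpha I)^{-1}\to A^{-1}$ in norm as $\alpha\to 0^+$, read off the identity $(A+\alpha I)^{-1}-A^{-1}=-\alpha(A+\alpha I)^{-1}A^{-1}$. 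In the ill-posed case I take $\|h_k\|=1$ with $\|Ah_k\|\to 0$; then $\langle Ah_k,h_k\rangle\to 0$ as well, so expanding $\|(A+\alpha I)h_k\|^2$ as above yields $\|(A+\alpha I)h_k\|\to\alpha$, and therefore $\|(A+\alpha I)^{-1}\|\ge\|h_k\|/\|(A+\alpha I)h_k\|\to 1/\alpha$. Together with the general upper bound (\ref{eq:linill0}) this gives the equality (\ref{eq:linill}) and completes the proof.
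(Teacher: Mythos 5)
Your proposal is correct, and it reaches all three assertions by a route genuinely different from the paper's. The paper organizes the proof around the spectrum: it characterizes the two cases by $0 \notin \sigma(A)$ versus $0 \in \sigma(A)$, invokes the orthogonal decomposition $\overline{\mathcal{R}(A)} \oplus \mathcal{N}(A)=X$ for monotone operators (cited from Plato's thesis) to identify $0\in\sigma(A)$ with ``$\mathcal{N}(A)\ne\{0\}$ or $\mathcal{R}(A)$ not closed'', proves (\ref{eq:linwell}) via the factorization $(A+\alpha I)^{-1}=A^{-1}(I+\alpha A^{-1})^{-1}$ together with monotonicity of $A^{-1}$, and obtains the lower bound in (\ref{eq:linill}) by a Neumann-series contradiction (if $\|(\frac{1}{\alpha}A+I)^{-1}\|<1$ then $0\notin\sigma(A)$). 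You instead reduce Definition~\ref{def:posed} directly to ``$A$ bounded below'', and the step where you gain something is your self-contained surjectivity argument: splitting $A=B+C$ into symmetric and skew parts and showing $\mathcal{N}(A^*)\subseteq\mathcal{N}(A)$ via $\langle Bz,z\rangle=0 \Rightarrow B^{1/2}z=0$ is in effect a proof of the fact the paper outsources to the citation behind (\ref{eq:plussum}); this makes your argument independent of external range-decomposition results and of spectral language. Your treatment of the resolvent norms is also different and uniform: the expansion $\|(A+\alpha I)x\|^2=\|Ax\|^2+2\alpha\langle Ax,x\rangle+\alpha^2\|x\|^2$ yields both the bound $\|(A+\alpha I)^{-1}\|\le\|A^{-1}\|$ in the well-posed case and, applied to an almost-null sequence with $\|(A+\alpha I)h_k\|\to\alpha$, the lower bound $1/\alpha$ in the ill-posed case, replacing the paper's Neumann-series step by an explicit construction. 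As a bonus, your resolvent-convergence identity $(A+\alpha I)^{-1}-A^{-1}=-\alpha(A+\alpha I)^{-1}A^{-1}$ shows that $K=\|A^{-1}\|$ is not only admissible but sharp, a point the paper asserts without argument. The trade-off: the paper's proof is shorter because it leans on cited results, while yours is longer but elementary and entirely self-contained (needing only the bounded inverse theorem and positive square roots).
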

\begin{proof}
The well-posed case (\ref{eq:linwell}) is characterized by $0 \notin \sigma(A)$, where $\sigma(A)$ denotes the spectrum of the operator $A$, whereas the ill-posed case is characterized by $0 \in \sigma(A)$.

For $0 \notin \sigma(A)$ we have by definition $A^{-1}\in \mathcal{L}(X)$, i.e.~with bijective operator $A:X \to X$, $\|A^{-1}\|<\infty$ and $\|A^{-1}(y-\tilde y) \|\le \|A^{-1}\|\,\|y-\tilde y\|$ for all $y,\tilde y \in X$, which indicates the local well-posedness everywhere. With $A$ also $A^{-1}$ is monotone and thus we can estimate with $K:=\|A^{-1}\|$ as
$$\|(A+\alpha I)^{-1}\|=\|A^{-1}(I+\alpha A^{-1})^{-1}\|\le \|A^{-1}\|\,\|(I+\alpha A^{-1})^{-1}\|\le K \quad \mbox{for all} \quad \alpha>0.$$

For $0 \in \sigma(A)$ we have by definition that at least one of the assertions $N(A) \neq \{0\}$ and $R(A)$ is not closed is true,
which can be summarized by the condition $\mathcal{R}(A) \ne X$ due to the orthogonal sum
\begin{equation} \label{eq:plussum}
\overline{\mathcal{R}(A)} \oplus \mathcal{N}(A)=X
\end{equation}
(cf., e.g., \cite[Theorem 1.1.10]{Plato95}). Both cases indicate local ill-posedness everywhere. This is obvious for  $\mathcal{N}(A)\ne\{0\}$. In the case $\mathcal{N}(A)=\{0\}$, but
$\mathcal{R}(A)\ne \overline{\mathcal{R}(A)}$, we have that $A^{-1}$ exists and is an unbounded linear operator and hence for all $r>0$ that there is sequence $\{x_n\} \subset X$ with $\|x_n\|=r$ and $\lim \limits_{n \to \infty}\|Ax_n\|=0$. Then, we have for  $\xdag+x_n \in \mathcal{B}_r(\xdag)$ the limit properties $\xdag+x_n \not \to \xdag$ but $A(\xdag+x_n) \to A \xdag$ as $n \to \infty$ and thus local ill-posedness
at $\xdag$.
Taking into account (\ref{eq:linill0}), to prove (\ref{eq:linill}) it remains to show $\|(A+\alpha I)^{-1}\| \ge \frac{1}{\alpha}$ for all $\alpha>0$. This, however, is a consequence of the Neumann series theory, which
says that we have, for a bounded linear operator $B: X \to X$ with $\|B\|<1$, that $(I-B)^{-1} \in \mathcal{L}(X)$. By setting $B:=(\frac{1}{\alpha}A+I)^{-1}$, we must have  $\|B\| \ge 1$. Otherwise, we would get that $0 \notin \sigma(A)$. Evidently, the conditions (\ref{eq:linwell}) and (\ref{eq:linill}) are incompatible, but one of them is always true for a bounded monotone operator $A$. Now the proof is complete.
\end{proof}

Note that the ill-posed case in Proposition~\ref{pro:distinguish} with $\mathcal{R}(A)\ne \overline{\mathcal{R}(A)}$ can only occur if $X$ is an infinite dimensional space and the range $\mathcal{R}(A)$ is also infinite dimensional. Moreover, it will be a by-product of the assertion of Proposition~\ref{pro:pro2} (cf.~(\ref{eq:kleinO})) below that for arbitrary monotone operators $A \in \mathcal{L}(X)$ the condition
$$\|(A+\alpha I)^{-1}x\|=O(1) \qquad \mbox{as} \qquad  \alpha \to 0, $$
which is in the case (\ref{eq:linwell}) valid for all $x \in X$,
cannot be improved to
$$\|(A+\alpha I)^{-1}x\|=o(1) \qquad \mbox{as} \qquad  \alpha \to 0 $$
if $x \ne 0$.

It is evident that strong monotonicity \begin{equation} \label{eq:Amonstrong}
 \langle Ax,x \rangle \geq C\,\|x\|^2 \qquad \mbox{for all} \quad x \in X,
\end{equation}
with some constant $C>0$ implies $\|x\| \le \frac{1}{C}\|Ax\|$ for all $x \in X$ and hence with $A^{-1}\in \mathcal{L}(X)$ local well-posedness of (\ref{eq:Amonotone}) everywhere. Vice versa,
$A^{-1}\in \mathcal{L}(X)$ does not, in general, imply strong monotonicity, because we have $\langle Ax,x \rangle=0$ for all $x \in X$ if the monotone operator $A$ is skew-symmetric, i.e.~for the adjoint operator $A^*$ that $A^*=-A$. The simplest case of such behaviour is $A=\left( \begin{array}{cc} 0 & -1 \\ 1 & 0 \end{array}  \right)$ for $X=\mathbb{R}^2$.

\begin{remark} \label{rem:ill}
{\rm For linear equations (\ref{eq:linopeq}) with monotone $A \in \mathcal{L}(X)$, the case distinction (cf.~Proposition~\ref{pro:distinguish}) between locally well-posed and ill-posed situations based on Definition~\ref{def:posed} is different from the usual
case distinction in the literature of linear regularization theory (see, e.g., \cite{Nashed87}), where a bounded pseudoinverse $A^\dagger$ characterized by $\mathcal{R}(A)=\overline{\mathcal{R}(A)}$ denotes well-posedness and an unbounded $A^\dagger$ characterized by $\mathcal{R}(A)\ne \overline{\mathcal{R}(A)}$  denotes ill-posedness. However, we will see below in Proposition~\ref{pro:differ} that the concept of Definition~\ref{def:posed} is the more appropriate one for our setting in the context of Lavrentiev regularization.
}\end{remark}

The remaining part of the paper is organized as follows: In Section~\ref{s2}, we discuss the impact of conditional stability on error estimates and convergence rates for the Lavrentiev regularization. Furthermore, we mention in Proposition~\ref{pro:differ}
some saturation result from \cite{Plato16} for the linear case which motivates to distinguish well-posedness and ill-posedness on the basis of Definition~\ref{def:posed}. The role of the regularization error in the noise-free case, called
bias, will be investigated in Section~\ref{s3} for nonlinear and linear problems. For linear operator equations general convergence rates, including logarithmic rates, are derived in Section~\ref{s4} by means of the method of approximate source conditions. This allows us to extend well-known convergence rates results for the Lavrentiev regularization, which were based on general source conditions, to the case of non-selfadjoint linear monotone forward operators for which general source conditions fail.
Examples presenting the self-adjoint multiplication operator as well as the non-selfadjoint fractional integral operator and Ces\`aro operator illustrate the theoretical results of this section. Extensions to the nonlinear case under specific
conditions on the nonlinearity structure in Section~\ref{s5} complete the paper.

\section{Error estimates and the case of conditional stability}\label{s2}
\setcounter{equation}{0}
\setcounter{theorem}{0}

\begin{proposition} \label{pro:exist}
Let under the Assumption~\ref{ass:ass0} the solution set to equation (\ref{eq:nonlinopeq})
$$L:=\{x \in X:\;F(x)=y\}$$
be nonempty. This set $L$ is closed and convex, and consequently there is a uniquely determined $\bar x$-minimum norm solution $\xdag_{mn} \in L$ to (\ref{eq:nonlinopeq}) such that $$\|\xdag_{mn}-\bar x\|= \min \{\|\xdag-\bar x\|:\;\xdag \in L\}. $$
The Lavrentiev-regularized solution $\xad \in X$ is uniquely determined, which means that (\ref{eq:lav}) has a unique solution $\xad$ for all $\bar{x} \in X$,
$\yd \in X$ and $\alpha>0$, which depends continuously on $\yd$. Moreover, for any solution $\xdag \in L$, the following three basic inequalities are valid:
\begin{eqnarray}
&&\|\xad-\xdag\|^2\leq \langle \xdag-\bar{x},\xdag-\xad\rangle+\frac{\delta}{\alpha}\|\xad-\xdag\|,\label{eq:IE1}\\
&&\|\xad-\xdag\|\leq \|\xdag-\bar{x}\|+\frac{\delta}{\alpha},\label{eq:IE2}\\
&&\|F(\xad)-F(\xdag)\|\leq \alpha\|\xdag-\bar{x}\|+\delta.\label{eq:IE3}
\end{eqnarray}
\end{proposition}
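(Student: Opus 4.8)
The plan is to establish the five claims in the order in which they are stated, disposing of the structural facts about $L$ and $\xad$ first and then obtaining the three inequalities from a single common computation. Throughout I would use that $F+\alpha I$ is strongly monotone with constant $\alpha$, which is already recorded in the introduction, together with Proposition~\ref{pro:BrowderMinty}.

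First I would show that $L$ is closed and convex. Since $F$ is maximally monotone and single-valued, I would use the graph characterization: a point $\hat x$ satisfies $F(\hat x)=y$ if and only if $\langle F(z)-y,\,z-\hat x\rangle\ge 0$ for all $z\in X$. For $x_1,x_2\in L$ and $\lambda\in[0,1]$, forming the convex combination of the two defining inequalities gives $\langle F(z)-y,\,z-(\lambda x_1+(1-\lambda)x_2)\rangle\ge 0$ for every $z$, whence maximality forces $\lambda x_1+(1-\lambda)x_2\in L$; closedness follows by passing to the limit in the same inequality along a convergent sequence in $L$. I expect this to be the main conceptual obstacle, because $F$ is single-valued while its inverse need not be, so one cannot argue pointwise and must genuinely invoke maximal monotonicity. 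Once $L$ is nonempty, closed and convex, the unique $\bar x$-minimum-norm solution $\xdag_{mn}$ is simply the metric projection of $\bar x$ onto $L$, which exists and is unique by the Hilbert-space projection theorem.

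For the regularized solution I would set $G:=F+\alpha I$, which is hemicontinuous and strongly monotone with constant $\alpha$, so that (\ref{eq:lav}) reads $G(\xad)=\yd+\alpha\bar x$. Proposition~\ref{pro:BrowderMinty} applied to $G$ then yields existence and uniqueness of $\xad$, and the Lipschitz estimate (\ref{eq:ideal}) for $G^{-1}$, with $\tfrac1\alpha$ in place of $\tfrac1C$, gives the continuous (indeed Lipschitz) dependence on $\yd$.

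The three inequalities would all stem from subtracting $F(\xdag)=y$ from (\ref{eq:lav}), giving the identity $F(\xad)-F(\xdag)+\alpha(\xad-\xdag)=(\yd-y)+\alpha(\bar x-\xdag)$, and then testing against $\xad-\xdag$. Discarding the nonnegative term $\langle F(\xad)-F(\xdag),\,\xad-\xdag\rangle$ furnished by (\ref{eq:Fmon}) and bounding $\langle\yd-y,\,\xad-\xdag\rangle\le\delta\|\xad-\xdag\|$ via Cauchy-Schwarz and (\ref{eq:noise}) yields $\alpha\|\xad-\xdag\|^2\le\alpha\langle\xdag-\bar x,\,\xdag-\xad\rangle+\delta\|\xad-\xdag\|$, which is (\ref{eq:IE1}) after division by $\alpha$. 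Estimating the remaining inner product in (\ref{eq:IE1}) by Cauchy-Schwarz and cancelling one factor of $\|\xad-\xdag\|$ produces (\ref{eq:IE2}). For (\ref{eq:IE3}) the only extra idea is to expand $\|(\yd-y)+\alpha(\bar x-\xdag)\|^2=\|F(\xad)-F(\xdag)+\alpha(\xad-\xdag)\|^2\ge\|F(\xad)-F(\xdag)\|^2$, where the inequality again drops the nonnegative cross term $2\alpha\langle F(\xad)-F(\xdag),\,\xad-\xdag\rangle$ (and the square term); taking square roots and using the triangle inequality with (\ref{eq:noise}) gives $\|F(\xad)-F(\xdag)\|\le\alpha\|\xdag-\bar x\|+\delta$. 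Each estimate thus rests on the same two ingredients, monotonicity to discard a sign-definite term and Cauchy-Schwarz to absorb the data error, so beyond the convexity argument the remainder is routine.
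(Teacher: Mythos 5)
Your proof is correct and follows essentially the same route as the paper: Browder--Minty applied to the strongly monotone operator $F+\alpha I$ for existence, uniqueness and Lipschitz stability of $\xad$, the projection theorem for $\xdag_{mn}$, and the three inequalities obtained by differencing $F(\xdag)=y$ against (\ref{eq:lav}), then discarding the nonnegative monotonicity term and applying Cauchy--Schwarz. The only cosmetic deviations are that you prove closedness and convexity of $L$ directly from the graph characterization of maximal monotonicity where the paper simply cites \cite[Prop.~23.39]{Bauschke11}, and that for (\ref{eq:IE3}) you expand $\|F(\xad)-F(\xdag)+\alpha(\xad-\xdag)\|^2$ rather than testing the equation with $F(\xad)-F(\xdag)$ as in the paper's identity (\ref{eq:F2}) --- both yield the identical estimate.
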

\begin{proof}
The closedness and convexity of $L$ is due to the maximal monotonicity of $F: X \to X$ (cf.~\cite[Prop.~23.39]{Bauschke11}). Then the $\bar x$-minimum norm solution $\xdag_{mn}$ is the uniquely determined
best approximation of $\bar x$ in $L$.  The next assertion of the proposition is a consequence of the Browder-Minty theorem which ensures that for all $\alpha>0$ the operator $F+\alpha I:X \to X$ is bijective and strongly monotone such that $\xad$ is uniquely determined and depends continuously on the data $\yd$. As outlined in \cite{HKR16},
by testing (\ref{eq:lav}) with the two elements $\xad-\xdag$ and $F(\xad)-F(\xdag)$ we obtain
\begin{eqnarray} \label{eq:F1}
&&\langle F(\xad)-F(\xdag), \xad-\xdag\rangle + \langle y-\yd,\xad-\xdag\rangle \nonumber
\\&&+\alpha \|\xad-\xdag\|^2+\alpha \langle \xdag-\bar{x},\xad-\xdag\rangle=0
\end{eqnarray}
and
\begin{eqnarray} \label{eq:F2}
&&\|F(\xad)-F(\xdag)\|^2 + \langle y-\yd,F(\xad)-F(\xdag)\rangle \nonumber
\\&&+\alpha \langle F(\xad)-F(\xdag), \xad-\xdag\rangle+\alpha \langle \xdag-\bar{x},F(\xad)-F(\xdag)\rangle=0\,,
\end{eqnarray}
respectively. By using the monotonicity  (\ref{eq:Fmon}) of $F$, the Cauchy-Schwarz inequality yields (\ref{eq:IE1}) and moreover (\ref{eq:IE2}), as a consequence of (\ref{eq:F1}), while (\ref{eq:IE3}) follows as a consequence of (\ref{eq:F2}). This completes the proof.
\end{proof}

\begin{proposition} \label{pro:errorcondstab}
For a solution $\xdag$ of equation (\ref{eq:nonlinopeq}), assume that a conditional stability estimate of the form (\ref{eq:condstab}) with $\mathcal{D}(F)=X,\;Q=\mathcal{B}_r(\xdag)$, and some concave index function $\varphi$ holds.
Then the solution set $L$ is a singleton, i.e.~$L=\{\xdag\}$, and for an a priori parameter choice $\alpha=\alpha(\delta)=c\,\delta,\;c>0,$  the Lavrentiev regularized solutions $\xad$ convergence to $\xdag$ with the rate
\begin{equation} \label{eq:rate1}
\|x_{\alpha(\delta)}^\delta-\xdag\| =\mathcal{O}(\varphi(\delta)) \qquad \mbox{as} \qquad \delta \to 0
\end{equation}
if the radius $r$ in (\ref{eq:condstab}) is sufficiently large such that $r > \|\xdag-\bar{x}\|+\frac{1}{c}$.
\end{proposition}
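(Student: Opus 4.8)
The plan is to separate the statement into the uniqueness claim $L=\{\xdag\}$ and the rate (\ref{eq:rate1}), and to treat them in that order.

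For the uniqueness claim I would exploit that, by Proposition~\ref{pro:exist}, the solution set $L$ is convex. Suppose, for contradiction, that some $\tilde x \in L$ satisfies $\tilde x \ne \xdag$. By convexity the entire segment $\hat x_\lambda := (1-\lambda)\xdag + \lambda\tilde x$, $\lambda \in [0,1]$, lies in $L$, and since $\|\hat x_\lambda - \xdag\| = \lambda\,\|\tilde x - \xdag\|$, for every sufficiently small $\lambda>0$ the point $\hat x_\lambda$ belongs to $\mathcal{B}_r(\xdag)=Q$ while still solving $F(\hat x_\lambda)=y=F(\xdag)$. Applying the conditional stability estimate (\ref{eq:condstab}) to $x=\hat x_\lambda$ then gives $\|\hat x_\lambda-\xdag\| \le \varphi(\|F(\hat x_\lambda)-F(\xdag)\|)=\varphi(0)=0$, whence $\hat x_\lambda=\xdag$; this contradicts $\lambda>0$ and $\tilde x \ne \xdag$. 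Hence $L=\{\xdag\}$.

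For the rate, the decisive step — the one I expect to carry the whole argument — is to verify that the a priori choice $\alpha=c\delta$ keeps the regularized solution inside the stability region $Q$. Substituting $\alpha=c\delta$ into the basic inequality (\ref{eq:IE2}) yields the $\delta$-independent bound $\|\xad-\xdag\|\le \|\xdag-\bar{x}\|+\tfrac{1}{c}$, so the hypothesis $r>\|\xdag-\bar{x}\|+\tfrac{1}{c}$ places $\xad \in \mathcal{B}_r(\xdag)=Q$ for every $\delta>0$. This is precisely the containment needed to legitimately apply (\ref{eq:condstab}) to $x=\xad$; without it the conditional stability estimate would be unavailable, and this is where the restriction on $r$ is genuinely used.

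Once $\xad\in Q$ is secured, the remaining chain is routine. From (\ref{eq:condstab}) we have $\|\xad-\xdag\|\le \varphi(\|F(\xad)-F(\xdag)\|)$, and the basic inequality (\ref{eq:IE3}) with $\alpha=c\delta$ bounds the argument by $\|F(\xad)-F(\xdag)\|\le c\delta\,\|\xdag-\bar{x}\|+\delta = K\delta$, where $K:=c\,\|\xdag-\bar{x}\|+1\ge 1$. Monotonicity of $\varphi$ then gives $\|\xad-\xdag\|\le \varphi(K\delta)$. Finally I would invoke concavity of $\varphi$ together with $\varphi(0)=0$, which yields the scaling inequality $\varphi(Kt)\le K\,\varphi(t)$ for $K\ge 1$ (equivalently, $t\mapsto \varphi(t)/t$ is nonincreasing), so that $\|\xad-\xdag\|\le K\,\varphi(\delta)=\mathcal{O}(\varphi(\delta))$ as $\delta\to 0$, which is exactly (\ref{eq:rate1}). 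Concavity is thus essential for the last conversion of $\varphi(K\delta)$ into $\mathcal{O}(\varphi(\delta))$, while the index-function property $\varphi(0)=0$ (strict monotonicity) is what makes the uniqueness argument go through.
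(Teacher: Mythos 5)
Your proposal is correct and follows essentially the same route as the paper's proof: uniqueness via the stability estimate forcing $L \cap \mathcal{B}_r(\xdag)=\{\xdag\}$ combined with convexity of $L$, then (\ref{eq:IE2}) with $\alpha=c\delta$ to place $\xad$ in the stability ball, and finally (\ref{eq:condstab}) with (\ref{eq:IE3}) plus the concavity scaling $\varphi(K\delta)\le K\varphi(\delta)$ to obtain (\ref{eq:rate1}). The paper writes the final constant as $2\max(1,c\,\|\xdag-\bar{x}\|)$ rather than your $K=c\,\|\xdag-\bar{x}\|+1$, but this is an immaterial difference.
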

\begin{proof}
A conditional stability estimate of the form (\ref{eq:condstab}) with $\mathcal{D}(F)=X,\;Q=\mathcal{B}_r(\xdag)$ and some $r>0$ ensures that $L \cap \mathcal{B}_r(\xdag)=\{\xdag\}$ is a singleton. As $L$ is a convex set (cf.~Proposition~\ref{pro:exist}),
there cannot be a second element in $L$. Now we have from (\ref{eq:IE2}) that $\|x_{\alpha(\delta)}^\delta-\xdag\|\leq \|\xdag-\bar{x}\|+\frac{1}{c}$ and thus $x_{\alpha(\delta)}^\delta \in \mathcal{B}_r(\xdag)$. Then (\ref{eq:condstab}) and (\ref{eq:IE3}) yield $$\|x_{\alpha(\delta)}^\delta-\xdag\|\leq \varphi(\|F(x_{\alpha(\delta)}^\delta)-F(\xdag)\|) \leq \varphi(c\,\delta\,\|\xdag-\bar{x}\|+\delta) \leq 2\,\max(1,c\,\|\xdag-\bar{x}\|)\,\varphi(\delta).$$
This proves the proposition.
\end{proof}

As was mentioned above for the Tikhonov regularization, Proposition~\ref{pro:errorcondstab} shows that also the Lavrentiev regularization ensures convergence and rates of the approximate solutions
under the conditional stability estimate (\ref{eq:condstab}) by embedding the regularized solutions in the stability region, which is here $\mathcal{B}_r(\xdag)$.
From Propositions~\ref{pro:condstab} and \ref{pro:errorcondstab}  we immediately arrive at the following corollary.

\begin{corollary}\label{cor:condstab} Choose the regularization parameter for the Lavrentiev regularization a priori as $\alpha(\delta) \sim \delta$.
If $F$ is strongly monotone with sufficiently large $r>0$ in (\ref{eq:Fmonstrongloc}), then we have a linear (Lipschitz) convergence rate
\begin{equation} \label{eq:linconrate}
\|x_{\alpha(\delta)}^\delta-\xdag\| =\mathcal{O}(\delta) \qquad \mbox{as} \qquad \delta \to 0.
\end{equation}
If $F$ is uniformly monotone with $\zeta(t)=t^{\kappa+1}, \;\kappa>1$, and sufficiently large $r>0$ in (\ref{eq:Fmongenloc}), then we have a H\"older convergence rate
\begin{equation} \label{eq:Hrate}
\|x_{\alpha(\delta)}^\delta-\xdag\| =\mathcal{O}(\delta^{1/\kappa}) \qquad \mbox{as} \qquad \delta \to 0.
\end{equation}
\end{corollary}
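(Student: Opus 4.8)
The plan is to obtain both rates purely by chaining Proposition~\ref{pro:condstab} (which converts a local uniform-monotonicity inequality into a conditional stability estimate) with Proposition~\ref{pro:errorcondstab} (which turns such a stability estimate into a convergence rate under the a priori choice $\alpha\sim\delta$). No new inequality has to be proved; the entire content lies in verifying that the index-function hypotheses of these two results are satisfied and then reading off $\varphi(\delta)$ from \eqref{eq:rate1}.

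First I would treat the strongly monotone case \eqref{eq:Fmonstrongloc}. Proposition~\ref{pro:condstab} already records that this is a conditional stability estimate of the form \eqref{eq:condstab} with $\mathcal{D}(F)=X$, $Q=\mathcal{B}_r(\xdag)$, and the linear, hence concave, index function $\varphi(t)=\tfrac{1}{C}\,t$. Fixing an a priori choice $\alpha(\delta)=c\,\delta$ with $c>0$ and taking $r$ large enough that $r>\|\xdag-\bar x\|+\tfrac1c$, the hypotheses of Proposition~\ref{pro:errorcondstab} hold, so that $\|x_{\alpha(\delta)}^\delta-\xdag\|=\mathcal{O}(\varphi(\delta))=\mathcal{O}(\delta)$, which is exactly \eqref{eq:linconrate}.

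For the uniformly monotone case with $\zeta(t)=t^{\kappa+1}$, $\kappa>1$, I would factor $\zeta(t)=\theta(t)\,t$ with $\theta(t)=t^{\kappa}$ and check that $\theta$ qualifies as a convex index function: it is continuous, strictly increasing, satisfies $\theta(0)=0$, and is convex whenever $\kappa\ge 1$. Proposition~\ref{pro:condstab} then supplies \eqref{eq:condstab} with the concave index function $\varphi(t)=\theta^{-1}(t)=t^{1/\kappa}$, concave because $1/\kappa<1$. Applying Proposition~\ref{pro:errorcondstab} under the same choice $\alpha(\delta)=c\,\delta$ and the same largeness requirement on $r$ yields $\|x_{\alpha(\delta)}^\delta-\xdag\|=\mathcal{O}(\varphi(\delta))=\mathcal{O}(\delta^{1/\kappa})$, i.e.\ \eqref{eq:Hrate}.

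There is no genuine obstacle here; the only points deserving care are bookkeeping ones. One must confirm that $\theta(t)=t^\kappa$ really is a convex index function and observe that the strict inequality $\kappa>1$ is what places us in the genuinely sub-linear H\"older regime ($1/\kappa<1$), the borderline $\kappa=1$ merely reproducing the strongly monotone case already handled. It is also worth noting, in line with Remark~\ref{rem:convex}, that the same rate formula $\delta^{1/\kappa}$ would read as an over-linear rate in the excluded range $0<\kappa<1$, but there $\theta$ is concave and Proposition~\ref{pro:condstab} no longer applies directly. Finally one checks that the abstract condition $r>\|\xdag-\bar x\|+\tfrac1c$ is precisely what the phrase ``sufficiently large $r$'' encodes once $\alpha=c\,\delta$ is fixed. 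Both rates then follow by a single substitution of $\varphi$ into \eqref{eq:rate1}.
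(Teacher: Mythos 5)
Your proposal is correct and coincides with the paper's own argument: the authors state that Corollary~\ref{cor:condstab} follows ``immediately'' from Propositions~\ref{pro:condstab} and \ref{pro:errorcondstab}, which is exactly the chaining you carry out, with $\theta(t)=C\,t$, $\varphi(t)=\tfrac{1}{C}t$ in the strongly monotone case and $\theta(t)=t^{\kappa}$, $\varphi(t)=t^{1/\kappa}$ in the uniformly monotone case. Your bookkeeping remarks (convexity of $\theta$ for $\kappa\ge 1$, the meaning of ``sufficiently large $r$'' as $r>\|\xdag-\bar x\|+\tfrac1c$) are consistent with the paper and require no further justification.
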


\begin{example}\label{ex:power}
{\rm (One dimensional example) For $X:=\mathbb{R}$ with $\|x\|:=|x|$ we consider the continuous monotone operator $F:\mathbb{R} \to \mathbb{R}$ defined for exponents $\kappa>0$ as
$$F(x):=\left\{\begin{array}{ccc} -1 & \mbox{if} & -\infty < x < -1 \\ -(-x)^\kappa & \mbox{if} & -1 \le x  \le 0 \\x^\kappa & \mbox{if} & 0 < x  \le 1 \\ 1 & \mbox{if} & 1 < x < \infty \end{array} \right.,$$
which however is not bijective and not coercive. Then we have obviously local ill-posedness at $\xdag$ if $\xdag<-1$ or $\xdag>1$. On the other hand we have for all $\kappa>0$  local well-posedness at $\xdag=0$, because the
local uniform monotonicity condition  (\ref{eq:Fmongenloc}) is satisfied there with $\zeta(t)=t^{\kappa+1}$ such that Proposition~\ref{pro:condstab} and Corollary~\ref{cor:condstab} apply for $\xdag=0$ with $\theta(t)=t^\kappa$ for $\kappa \ge 1$. Indeed, a superlinear convergence rate (\ref{eq:Hrate}) at $\xdag=0$ occurs if $0<\kappa<1$.
}\end{example}

For the special case of monotone linear operators $A \in \mathcal{L}(X)$ we have the two different situations formulated in Proposition~\ref{pro:differ}. This indicates a significant gap in the convergence rates and motivates the specific case distinction
between well-posedness and ill-posedness based on Definition~\ref{def:posed} also for linear monotone operators $A$ as mentioned above in Remark~\ref{rem:ill}.

\begin{proposition} \label{pro:differ}
For the maximal best possible error
$$E_{\xdag}(\delta):=\sup \limits_{\yd \in X:\,\|y-\yd\| \le \delta} \,\inf \limits_{\alpha>0} \|\xad-\xdag\| $$
of Lavrentiev regularization to equation (\ref{eq:linopeq}) with bounded monotone linear operator $A$ we have on the one hand
\begin{equation} \label{eq:satwell}
E_{\xdag}(\delta)=\mathcal{O}(\delta) \qquad \mbox{as} \qquad \delta \to 0
\end{equation}
for all $\xdag \in X$ if (\ref{eq:linwell}) is valid, i.e.~if $A$ is continuously invertible. On the other hand, we have that
\begin{equation} \label{eq:satres}
E_{\xdag}(\delta)=o(\sqrt{\delta}) \qquad \mbox{as} \qquad \delta \to 0 \qquad \mbox{implies} \qquad \xdag-\bar x=0
\end{equation}
if (\ref{eq:linwell}) is violated for arbitrarily large $K>0$, that is exactly the case if the null-space $\mathcal{N}(A)$ of $A$ is not trivial or the range $\mathcal{R}(A)$ of $A$ is not closed.
\end{proposition}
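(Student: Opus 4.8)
The plan is to reduce everything to the elementary error splitting for Lavrentiev regularization. Writing $w:=\xdag-\bar x$ and $e:=\yd-y$ with $\|e\|\le\delta$, and using $y=A\xdag$ together with the explicit form (\ref{eq:Lavex}), a direct computation gives
$$\xad-\xdag=-\alpha(A+\alpha I)^{-1}w+(A+\alpha I)^{-1}e=(A+\alpha I)^{-1}(e-\alpha w).$$
I will call $b_\alpha:=-\alpha(A+\alpha I)^{-1}w$ the bias (the noise-free regularization error) and $(A+\alpha I)^{-1}e$ the propagated noise term. The whole argument then consists of estimating these two quantities from above (for (\ref{eq:satwell})) and from below (for (\ref{eq:satres})).

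For the well-posed assertion (\ref{eq:satwell}) I would simply combine the triangle inequality with (\ref{eq:linwell}): $\|\xad-\xdag\|\le\alpha\|(A+\alpha I)^{-1}w\|+\|(A+\alpha I)^{-1}e\|\le\alpha K\|w\|+K\delta$, uniformly in all admissible $e$. Since this bound holds for every $\alpha>0$ and tends to $K\delta$ as $\alpha\to0$, taking first the infimum over $\alpha$ and then the supremum over $e$ yields $E_{\xdag}(\delta)\le K\delta=\mathcal{O}(\delta)$, with a constant independent of $\xdag$.

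For the ill-posed assertion (\ref{eq:satres}) I would argue by contraposition: assuming $w\ne0$, I will produce a constant $c>0$ with $E_{\xdag}(\delta)\ge c\sqrt{\delta}$ for all small $\delta$, which excludes $E_{\xdag}(\delta)=o(\sqrt\delta)$. Two ingredients drive this. The first is a bias lower bound valid for arbitrary bounded $A$: from $\|(A+\alpha I)^{-1}w\|\ge\|w\|/(\|A\|+\alpha)$ one gets $\|b_\alpha\|\ge c_0\alpha$ for $0<\alpha\le1$ and $\|b_\alpha\|\ge c_0$ for $\alpha\ge1$, with $c_0:=\|w\|/(\|A\|+1)>0$; hence, using (\ref{eq:linill0}), $\|\xad-\xdag\|\ge\|b_\alpha\|-\delta/\alpha\ge c_0\alpha-\delta/\alpha\ge\sqrt{c_0\delta/2}$ once $\alpha\ge\sqrt{2\delta/c_0}$. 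The second ingredient — where ill-posedness enters through (\ref{eq:linill}) — is the choice of a single worst-case noise $e$ that stays amplified for all small $\alpha$. In the self-adjoint model case this is transparent: since $0\in\sigma(A)$, the spectral band $\mathcal{R}(E([0,\mu]))$ (with $E([0,\mu])$ the spectral projection of $A$ for $[0,\mu]$) is nontrivial for every $\mu>0$; choosing a unit $\psi$ in this band with $\psi\perp E([0,\mu])w$ and setting $e=\delta\psi$, the spectral calculus gives $\|\xad-\xdag\|\ge\|E([0,\mu])(\xad-\xdag)\|\ge\delta/(\mu+\alpha)$ for all $\alpha$, the orthogonality forcing the bias and noise contributions to add rather than cancel. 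Calibrating $\mu:=\sqrt{2\delta/c_0}$ and splitting $\inf_{\alpha>0}$ at $\alpha=\mu$ — the band estimate controlling $\alpha\le\mu$ and the bias estimate controlling $\alpha\ge\mu$ — delivers $\inf_{\alpha>0}\|\xad-\xdag\|\ge c\sqrt\delta$, and thus $E_{\xdag}(\delta)\ge c\sqrt\delta$.

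The main obstacle is this last step in the genuinely non-selfadjoint situation, where no spectral decomposition of $A$ is available to manufacture the orthogonal worst-case noise. Here I would replace the spectral band by approximate null elements (unit vectors $u_n$ with $\|Au_n\|\to0$, which exist precisely because $0\in\sigma(A)$ in the ill-posed case) and control the resolvent through the identity $(A+\alpha I)^{-1}u_n-\tfrac1\alpha u_n=-\tfrac1\alpha(A+\alpha I)^{-1}Au_n$ together with $\|(A+\alpha I)^{-1}A\|\le1$ from (\ref{eq:monprop}); this is exactly the quantitative saturation analysis of \cite{Plato16}, to which I would appeal for the converse in full generality. Finally, I would separate off at the outset the easy subcase in which the bias does \emph{not} vanish as $\alpha\to0$ (a nonzero near-kernel component of $w$), where already the noise-free error stays bounded away from $0$, so that $E_{\xdag}(\delta)$ cannot even tend to $0$ and (\ref{eq:satres}) is immediate.
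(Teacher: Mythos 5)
Your proposal is correct, and at the point where the real difficulty sits it rests on the same pillar as the paper: for the saturation implication (\ref{eq:satres}) with a general (possibly non-selfadjoint) monotone $A$ you ultimately invoke Plato's theorem \cite[Theorem~5.1]{Plato16}, which is precisely what the paper's proof does, and your treatment of (\ref{eq:satwell}) is the paper's resolvent estimate (\ref{eq:linformula}), with the inessential difference that you let $\alpha \to 0$ rather than choose $\alpha(\delta) \sim \delta$. What you add --- and the paper does not attempt --- is a self-contained proof of the lower bound $E_{\xdag}(\delta) \ge c\sqrt{\delta}$ (hence of (\ref{eq:satres})) in the selfadjoint case: your bias lower bound via $\|(A+\alpha I)^{-1}w\| \ge \|w\|/(\|A\|+\alpha)$ is exactly the paper's inequality (\ref{eq:proof1}), used there only for the bias saturation of Proposition~\ref{pro:pro2}, and you complement it for small $\alpha$ by a spectral worst-case noise, splitting the infimum over $\alpha$ at $\alpha \sim \sqrt{\delta}$; that calibration is sound. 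Two details of this supplementary argument need repair. First, a unit vector $\psi \in \mathcal{R}(E([0,\mu]))$ orthogonal to $E([0,\mu])w$ need not exist, since the band can be one-dimensional and spanned by $E([0,\mu])w$; the fix is to take in that case the anti-parallel noise $e = -\delta\, E([0,\mu])w/\|E([0,\mu])w\|$, for which the bias and noise contributions still add rather than cancel. Second, the existence of unit vectors $u_n$ with $\|Au_n\| \to 0$ does not follow from $0 \in \sigma(A)$ for arbitrary bounded operators (only the boundary-of-spectrum or normal cases are automatic); in the present setting it follows from $\mathcal{N}(A) \ne \{0\}$, or from the unboundedness of $A^{-1}$ when $\mathcal{R}(A)$ is not closed, exactly as constructed in the proof of Proposition~\ref{pro:distinguish}. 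Since you delegate the general non-selfadjoint case to \cite{Plato16} anyway, these are refinements of your extra argument, not gaps in the proof of the proposition itself.
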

\begin{proof}
For the special case of monotone linear operators $A \in \mathcal{L}(X)$ we find directly from (\ref{eq:Lavex})
the error estimate
\begin{equation} \label{eq:linformula}
\|\xad-\xdag\| \le \|(A+\alpha I)^{-1}[(\yd-y)+\alpha(\bar x-\xdag)]\| \le \|(A+\alpha I)^{-1}\|(\delta+\alpha\,\|\xdag-\bar x\|).
\end{equation}
This ensures the linear convergence rate (\ref{eq:linconrate}) for the regularization parameter choice $\alpha(\delta) \sim \delta$ in the well-posed case (\ref{eq:linwell}) and hence (\ref{eq:satwell}).
The implication (\ref{eq:satres}), however, recalls the recently published saturation result from Theorem 5.1 in \cite{Plato16} for the ill-posed case. This proves a significant gap in the convergence rates between well-posed
and ill-posed situations.
\end{proof}

It should be mentioned that the saturation result (\ref{eq:satres}) for noisy data is a Lavrentiev regularization analogue to the well-known saturation result \cite[Proposition 5.3]{EHN96} for the Tikhonov regularization.

\section{The distinguished role of bias}\label{s3}
\setcounter{equation}{0}
\setcounter{theorem}{0}

For the error analysis of Lavrentiev-regularized solutions it is helpful to consider in addition to $\xad$  the regularized solutions $\xa:=x^0_\alpha$ in the noise-free case $(\delta=0)$, which satisfy the operator equations
\begin{equation} \label{eq:Lavnonlin0}
F(\xa)+\alpha(\xa-\bar{x})=y
\end{equation}
and
\begin{equation} \label{eq:Lavlin0}
A\xa+\alpha(\xa-\bar{x})=y
\end{equation}
in the nonlinear and linear case, respectively. From Proposition~\ref{pro:exist} we have that also the elements $\xa \in X$ are uniquely determined for all $\alpha>0$. It is obvious in regularization theory that the total norm error of regularization can be estimated above by the triangle inequality as
\begin{equation} \label{eq:triangle}
\|\xad-\xdag\| \le \|\xa-\xdag\|+\|\xa-\xad\|
\end{equation}
such that an upper bound of the noise propagation error $\|\xa-\xad\|$ is independent of the solution $\xdag$, but depends on the noise level $\delta$ and on the regularization parameter $\alpha>0$.
If the regularization procedure $R_\alpha$ is expressed by a continuous linear mapping $\yd \mapsto \xad$, then estimates of the form   $\|\xa-\xad\|\le \|R_\alpha\|\delta$ are standard, where
$\lim_{\alpha \to 0}\|R_\alpha\|=\infty$ takes place for the ill-posed case. So we have for (\ref{eq:Lavlin0}), in the case $\bar x=0$, $R_\alpha y=(A+\alpha I)^{-1}y$ with
\begin{equation} \label{eq:noiseprop}
\|\xa-\xad\|\le \frac{\delta}{\alpha}
\end{equation}
for all $\delta \ge 0$ and $\alpha>0$ due to (\ref{eq:linill0}). For nonlinear ill-posed problems, however, the regularization procedure is in general characterized by a nonlinear mapping $\yd \mapsto \xad$.
Estimates of $\|\xa-\xad\|$ from above independent of $\xdag$ are then restricted to classes of forward operator $F$ with specific nonlinearity properties, and we refer for example to the discussion in \cite{SEK93}
for estimates of the form $\|\xa-\xad\|\le \frac{c\delta}{\sqrt{\alpha}}$ versus  $\|\xa-\xad\|\le \frac{c\delta}{\alpha}$ for the nonlinear Tikhonov regularization (cf.~(\ref{eq:Tik})). Taking advantage of the monotonicity of $F$ the situation is simpler for the nonlinear Lavrentiev regularization as the following lemma shows.

\begin{lemma} \label{lem:lem1}
For arbitrary monotone forward operators $F$ and solutions $\xdag \in X$ we have the uniform propagation error estimate (\ref{eq:noiseprop}) for the Lavrentiev regularization with $\xad$ and $\xa$ from
(\ref{eq:lav}) and (\ref{eq:Lavnonlin0}), respectively.
\end{lemma}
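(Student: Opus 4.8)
The plan is to subtract the two defining equations for $\xad$ and $\xa$ and then to test the resulting identity against the difference of the regularized solutions, using the monotonicity (\ref{eq:Fmon}) of $F$ to discard the contribution coming from the forward operator. Concretely, I would subtract (\ref{eq:Lavnonlin0}) from (\ref{eq:lav}) to obtain
\[
F(\xad)-F(\xa)+\alpha(\xad-\xa)=\yd-y.
\]
Here the reference element $\bar x$ and the linear shifts $\alpha\bar x$ cancel, so the difference depends only on the operator values and the data mismatch $\yd-y$.

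Next I would take the inner product of this identity with $\xad-\xa$, giving
\[
\langle F(\xad)-F(\xa),\,\xad-\xa\rangle+\alpha\,\|\xad-\xa\|^2=\langle \yd-y,\,\xad-\xa\rangle.
\]
By the monotonicity (\ref{eq:Fmon}) of $F$ the first term on the left is nonnegative, so it may simply be dropped, and by the Cauchy--Schwarz inequality together with the noise model (\ref{eq:noise}) the right-hand side is bounded by $\delta\,\|\xad-\xa\|$. Thus
\[
\alpha\,\|\xad-\xa\|^2\le \langle \yd-y,\,\xad-\xa\rangle\le \|\yd-y\|\,\|\xad-\xa\|\le \delta\,\|\xad-\xa\|.
\]
If $\xad=\xa$ the estimate (\ref{eq:noiseprop}) holds trivially; otherwise I would divide through by $\|\xad-\xa\|>0$ to arrive at $\|\xad-\xa\|\le \delta/\alpha$, as claimed.

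There is no genuine obstacle in this argument; the only point requiring minimal care is the degenerate case $\xad=\xa$, which is disposed of immediately. What deserves emphasis is the underlying mechanism: monotonicity supplies the correct sign for the operator term uniformly in $\xdag$ and \emph{without} any structural assumption on the nonlinearity of $F$. This is precisely why, in contrast with nonlinear Tikhonov regularization, where bounds of the form $c\delta/\sqrt{\alpha}$ versus $c\delta/\alpha$ require restrictions on $F$, the Lavrentiev propagation bound holds for all monotone $F$, is independent of the solution $\xdag$, and reproduces exactly the linear estimate already recorded in (\ref{eq:linill0}).
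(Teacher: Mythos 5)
Your proof is correct and follows essentially the same route as the paper: subtract the two defining equations, test with $\xad-\xa$, discard the nonnegative monotonicity term, and apply Cauchy--Schwarz with the noise model, handling the degenerate case $\xad=\xa$ separately. Nothing further is needed.
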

\begin{proof}
For $\xad-\xa = 0$, (\ref{eq:noiseprop}) is trivially satisfied.  As  difference of the two equations (\ref{eq:lav}) and (\ref{eq:Lavnonlin0}) we have the equation
$$F(\xad)-F(\xa)+\alpha(\xad-\xa)=\yd-y$$ and thus by testing with $\xad-\xa \ne 0$
$$\langle F(\xad)-F(\xa),\xad-\xa \rangle+\alpha\,\|\xad-\xa\|^2=\langle \yd-y, \xad-\xa\rangle \le \|\xad-\xa\|\, \delta.$$
Due to the monotonicity property (\ref{eq:Fmon}) this implies the inequality
$\alpha\,\|\xad-\xa\| \le \delta$ and hence  (\ref{eq:noiseprop}) which completes the proof.
\end{proof}

Taking into account the bound (\ref{eq:noiseprop}) of the noise propagation error,
for fixed forward operator $F$ and fixed solution $\xdag$ the asymptotics of the total regularization error $\|\xad-x^\dagger\|$ as $\delta \to 0$ of Lavrentiev regularization is essentially influenced by the asymptotics of the bias (regularization error for noise-free data) $$B^F_{\xdag}(\alpha):=\|\xa-\xdag\|$$
as $\alpha \to 0$. From \cite[Section~23]{Bauschke11} we derive the following proposition for the bias of the Lavrentiev regularization.

\begin{proposition} \label{pro:pro10}
Under Assumption~\ref{ass:ass0} let $\xdag$ solve the equation (\ref{eq:nonlinopeq}). Then we have
\begin{equation} \label{eq:limitbias0}
\lim \limits _{\alpha \to 0} B_{\xdag}^F(\alpha) =0
\end{equation}
if and only if  $\xdag=\xdag_{mn} \in X$, i.e.~$\xdag$ is the $\bar x$-minimum norm solution to equation (\ref{eq:nonlinopeq}).
\end{proposition}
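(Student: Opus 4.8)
The plan is to reduce the entire equivalence to one convergence fact: the noise-free regularized solutions converge strongly to the $\bar x$-minimum norm solution, $\xa \to \xdag_{mn}$ as $\alpha \to 0$, and this limit does not depend on which solution $\xdag \in L$ one later measures against. Granting this, both implications follow simultaneously, since
\[
B_{\xdag}^F(\alpha) = \|\xa - \xdag\| \longrightarrow \|\xdag_{mn} - \xdag\| \qquad (\alpha \to 0),
\]
and the limit on the right vanishes exactly when $\xdag = \xdag_{mn}$, using the uniqueness of the $\bar x$-minimum norm solution established in Proposition~\ref{pro:exist}. As preparation I would specialize the basic inequalities of Proposition~\ref{pro:exist} to the noise-free case $\delta=0$, $\yd=y$, in which $\xad = \xa$: estimate (\ref{eq:IE2}) gives $\|\xa - \xdag_{mn}\| \le \|\xdag_{mn} - \bar x\|$, so $\{\xa\}_{\alpha>0}$ is bounded, and the defining equation (\ref{eq:Lavnonlin0}) gives $\|F(\xa) - y\| = \alpha\,\|\xa - \bar x\| \to 0$, i.e.\ $F(\xa) \to y$ strongly.

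Next I would pass to limits. For any sequence $\alpha_n \to 0$, boundedness together with weak sequential compactness of bounded sets in $X$ yields a subsequence (not relabeled) with $x_{\alpha_n} \rightharpoonup z$ for some $z \in X$. \emph{The main obstacle is to show that this weak limit is itself a solution}, namely $F(z) = y$. This is precisely where the maximal monotonicity of $F$ from Assumption~\ref{ass:ass0} is decisive: the graph of a maximally monotone operator is sequentially closed in the product of the weak and the strong topology, so from $x_{\alpha_n} \rightharpoonup z$ and $F(x_{\alpha_n}) \to y$ one obtains $F(z) = y$, that is $z \in L$. With $z \in L$ secured, I would upgrade weak to strong convergence through (\ref{eq:IE1}) with $\delta = 0$ applied at the solution $\xdag = z$, namely $\|\xa - z\|^2 \le \langle z - \bar x,\, z - \xa\rangle$; evaluating along the subsequence, the right-hand side tends to $0$ because $x_{\alpha_n} \rightharpoonup z$, whence $x_{\alpha_n} \to z$ strongly.

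It remains to identify $z = \xdag_{mn}$. Rearranging (\ref{eq:IE1}) with $\delta = 0$ yields the variational inequality
\[
\langle \xa - \bar x,\; \xa - v\rangle \le 0 \qquad \text{for all } v \in L,
\]
which holds for every $\alpha > 0$. Passing to the strong limit along the subsequence gives $\langle z - \bar x,\, z - v\rangle \le 0$ for all $v \in L$, and this is exactly the characterization of $z$ as the metric projection of $\bar x$ onto the closed convex set $L$ (cf.\ Proposition~\ref{pro:exist}); by uniqueness of the projection, $z = \xdag_{mn}$. Since the limit is the same for every subsequence, a standard subsequence argument promotes the convergence to the full family, $\xa \to \xdag_{mn}$ strongly as $\alpha \to 0$, which closes the reduction. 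I expect the demiclosedness of the graph to be the only genuinely nontrivial ingredient; everything else consists of elementary rearrangements of the inequalities (\ref{eq:IE1}) and (\ref{eq:IE2}).
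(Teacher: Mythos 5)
Your proposal is correct, and at the top level it follows the same reduction as the paper: everything rests on the single fact that the noise-free regularized solutions converge strongly to the projection of $\bar x$ onto the solution set $L$, i.e.\ $\xa \to \xdag_{mn}$ as $\alpha \to 0$, after which $B^F_{\xdag}(\alpha) = \|\xa-\xdag\| \to \|\xdag_{mn}-\xdag\|$ settles both directions of the equivalence at once. The difference lies in how this fact is established. The paper obtains it by citing Theorem~23.44(i) of Bauschke--Combettes (the equation (\ref{eq:Lavnonlin0}) exhibits $\xa$ as a resolvent of the maximally monotone operator $F-y$ evaluated at $\bar x$, with parameter $1/\alpha \to \infty$, so that theorem applies verbatim), while you reprove it in a self-contained way: boundedness of $\{\xa\}_{\alpha>0}$ from (\ref{eq:IE2}), the residual identity $\|F(\xa)-y\| = \alpha\|\xa-\bar x\| \to 0$, weak subsequential compactness, the weak-to-strong sequential closedness of the graph of a maximally monotone operator to conclude that any weak cluster point $z$ lies in $L$, then (\ref{eq:IE1}) with $\delta=0$ --- equivalently the variational inequality $\langle \xa-\bar x,\, \xa-v\rangle \le 0$ for all $v \in L$ --- used twice, first to upgrade weak to strong convergence along the subsequence and then to identify $z$ with the metric projection $P_L(\bar x) = \xdag_{mn}$, and finally a subsequence argument to pass to the full family. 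Each of these steps is sound: the rearrangement of (\ref{eq:IE1}) into the variational inequality is algebraically exact, and the demiclosedness of the graph is indeed the standard consequence of maximality, which is exactly where the hemicontinuity hypothesis of Assumption~\ref{ass:ass0} earns its keep. In effect you have reconstructed the proof of the cited theorem in this single-valued setting: your version buys transparency, showing precisely which ingredients (monotonicity for the energy inequalities, maximality for demiclosedness) carry the argument, at the cost of length, whereas the paper's citation-based proof buys brevity by outsourcing the analytic core to the monotone operator literature.
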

\begin{proof}
Under the stated assumptions we have from Proposition \ref{pro:exist} that the nonempty set $L:=\{x \in X: F(x)=y\}$  is closed and convex and hence the projection of $\bar x$ onto this set is an $\bar x$-minimum norm solution to equation (\ref{eq:nonlinopeq}) and uniquely determined. Then we have
from Theorem 23.44 (i) in \cite{Bauschke11} that the uniquely determined element $\xa \in X$ existing for all $\alpha>0$ and $\bar x \in X$, which satisfies the equation
$$F(\xa)-y+\alpha(\xa-\bar x)=0,$$
tends in the norm of $X$ to the projection of $\bar x$ on $L$. If $\xdag$ solves the equation (\ref{eq:nonlinopeq}), but fails to be an $\bar x$-minimum norm solution, then the projection of $\bar x$ on $L$ differs from $\xdag$ and
(\ref{eq:limitbias0}) cannot hold. This proves the proposition.
\end{proof}

The asymptotic behaviour of the bias $B^F_{\xdag}(\alpha) \to 0$ as $\alpha \to 0$ expressing the intrinsic smoothness of the solution $\xdag$ with respect to the forward operator $F$ (cf.~in a more general context the ideas in \cite[Chapt.~3]{Schuster12} and \cite{Hof15}) fully determines the specific error profile for the solution $\xdag$. Therefore the bias was called `profile function' in the former paper \cite{HofMat07} with focus on a general regularization scheme for linear ill-posed problems.

\begin{remark} \label{rem:rem1}
{\rm For the Lavrentiev regularization (\ref{eq:Lavlin}) to linear problems (\ref{eq:linopeq}) with monotone forward operator $A \in \mathcal{L}(X)$, Proposition~\ref{pro:pro10} applies
and based on formula (\ref{eq:plussum}) we note that a solution $\xdag$ to the equation (\ref{eq:linopeq}) is an $\bar x$-minimum solution $\xdag_{mn}$ if and only if $\xdag - \bar x$ is orthogonal to the null-space of the linear operator $A$, i.e.
\begin{equation} \label{eq:orthonull}
\xdag - \bar x \perp \mathcal{N}(A).
\end{equation}
The corresponding bias attains the form
 \begin{equation} \label{eq:biaslin}
B^A_{\xdag}(\alpha)=\|\xa-\xdag\|=\alpha\,\|(A+\alpha I)^{-1}(\xdag-\bar x)\|
\end{equation}
and we have $\lim \limits_{\alpha \to 0} B^A_{\xdag_{mn}}(\alpha)=0$, but $B^A_{\xdag}(\alpha) \not \to 0$ as $\alpha \to 0$ when the solution $\xdag$ to (\ref{eq:linopeq}) fails to satisfy (\ref{eq:orthonull}).
}\end{remark}

The following considerations are only of interest for the ill-posed case, because the estimate (\ref{eq:totallin}) below is not helpful for the well-posed case, in which (\ref{eq:linformula}) directly yields the linear rate (\ref{eq:linconrate}) for all $\xdag \in X$.
In the ill-posed case, however, the asymptotics of $B^A_{\xdag}(\alpha)$ for $\alpha \to 0$ determines, for example by  equilibrating the two terms in the right hand side of the inequality
 \begin{equation} \label{eq:totallin}
\|\xad-\xdag\| \le \alpha\,\|(A+\alpha I)^{-1}(\xdag-\bar x)\| + \frac{\delta}{\alpha},
\end{equation}
the chances and limitations of possible convergence rates of the total regularization error. This point was intensively analyzed in \cite{Taut02} for fractional power source conditions
 \begin{equation} \label{eq:directp}
\xdag-\bar x= A^p\,w, \qquad w \in X,\quad\;0<p\le 1,
\end{equation}
yielding for all $0<p \le 1$ the H\"older type convergence rates of the bias
\begin{equation} \label{eq:ratep}
B^A_{\xdag}(\alpha) =O(\alpha^p) \qquad \mbox{as}\quad \alpha \to 0.
\end{equation}
We note that for monotone operators $A \in \mathcal{L}(X)$ the fractional powers $A^p,\;0<p \le 1,$ are defined via the Balakrishnan calculus as
\begin{equation} \label{eq:Bala}
A^p := \frac{\sin \pi p }{\pi}
\int_0^\infty s^{p-1} \left(A + sI\right)^{-1} A \  ds.
\end{equation}
Because of the saturation result for the bias presented with the following proposition, we call the source condition
\begin{equation} \label{eq:direct1}
\xdag-\bar x= A\,w, \quad w \in X,
\end{equation}
benchmark source condition.
\begin{proposition} \label{pro:pro2}
With the exception of the singular case $\xdag-\bar{x}=0$ the benchmark source condition (\ref{eq:direct1}) yields with
\begin{equation} \label{eq:bestO}
B^A_{\xdag}(\alpha) =O(\alpha) \qquad \mbox{as}\quad \alpha \to 0
\end{equation}
the best possible bias rate, because
\begin{equation} \label{eq:kleinO}
B^A_{\xdag}(\alpha) =o(\alpha) \qquad \mbox{as}\quad \alpha \to 0 \qquad \mbox{implies} \qquad \xdag-\bar{x}=0.
\end{equation}
\end{proposition}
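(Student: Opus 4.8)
The plan is to reduce everything to the explicit bias formula (\ref{eq:biaslin}), namely $B^A_{\xdag}(\alpha) = \alpha\,\|(A+\alpha I)^{-1}(\xdag-\bar x)\|$, and to read off both the achievable rate and its optimality from it. For the upper bound (\ref{eq:bestO}) I would simply insert the benchmark source representation (\ref{eq:direct1}), $\xdag - \bar x = A\,w$ with $w \in X$, which gives
$$B^A_{\xdag}(\alpha) = \alpha\,\|(A+\alpha I)^{-1}A\,w\| \le \alpha\,\|(A+\alpha I)^{-1}A\|\,\|w\| \le \alpha\,\|w\|,$$
where the last inequality is exactly the bound $\|(A+\alpha I)^{-1}A\| \le 1$ from (\ref{eq:monprop}). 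This is the routine half and yields (\ref{eq:bestO}).

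The substance is the implication (\ref{eq:kleinO}). Abbreviate $v := \xdag - \bar x$ and $u_\alpha := (A+\alpha I)^{-1}v$, so that $B^A_{\xdag}(\alpha) = \alpha\,\|u_\alpha\|$ and the resolvent is well defined by (\ref{eq:monprop}). The key identity is the defining relation $(A+\alpha I)u_\alpha = v$, i.e.\ $A u_\alpha + \alpha u_\alpha = v$, and the idea is to extract from it a lower bound for $\|u_\alpha\|$ that does not degenerate as $\alpha \to 0$. Here I would use only the boundedness of $A$: from $A u_\alpha = v - \alpha u_\alpha$ together with $\|A u_\alpha\| \le \|A\|\,\|u_\alpha\|$ one obtains $\|A\|\,\|u_\alpha\| \ge \|v\| - \alpha\,\|u_\alpha\|$, hence
$$(\|A\| + \alpha)\,\|u_\alpha\| \ge \|v\|, \qquad \mbox{that is} \qquad \|u_\alpha\| \ge \frac{\|v\|}{\|A\| + \alpha}.$$
Multiplying by $\alpha$ furnishes the lower bias bound $B^A_{\xdag}(\alpha) \ge \frac{\alpha\,\|v\|}{\|A\| + \alpha}$.

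From this lower bound the implication is immediate: if $B^A_{\xdag}(\alpha) = o(\alpha)$, then dividing by $\alpha$ and letting $\alpha \to 0$ gives $0 = \lim_{\alpha \to 0} B^A_{\xdag}(\alpha)/\alpha \ge \|v\|/\|A\|$, which forces $v = \xdag - \bar x = 0$. I expect the only real subtlety, rather than an obstacle, to be the recognition that one should estimate $A u_\alpha$ through the operator norm of $A$, so that the troublesome term $A u_\alpha$ is absorbed as $\alpha \to 0$, instead of attempting to exploit monotonicity (\ref{eq:Amonotone}) for a lower bound; monotonicity enters only to guarantee invertibility of $A + \alpha I$. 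As a by-product, the same computation with $v$ an arbitrary nonzero element yields the claim announced after Proposition~\ref{pro:distinguish}, namely that $\|(A+\alpha I)^{-1}x\| = o(1)$ as $\alpha \to 0$ can hold only for $x = 0$.
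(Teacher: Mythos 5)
Your proposal is correct and takes essentially the same route as the paper: the upper bound is the identical application of (\ref{eq:monprop}) to $\xdag-\bar x=Aw$, and your key lower bound $\|(A+\alpha I)^{-1}(\xdag-\bar x)\| \ge \|\xdag-\bar x\|/(\|A\|+\alpha)$ is exactly the paper's inequality (\ref{eq:proof1}), merely derived via the triangle inequality on $Au_\alpha = v-\alpha u_\alpha$ rather than from $\|(A+\alpha I)x\|\le(\|A\|+\alpha)\|x\|$. The only (cosmetic) difference is that the paper treats the degenerate case $A=0$ separately, since your final division by $\|A\|$ is undefined there; your intermediate bound $B^A_{\xdag}(\alpha)\ge \alpha\,\|\xdag-\bar x\|/(\|A\|+\alpha)$ already covers that case, e.g.\ via $\|\xdag-\bar x\| \le (\|A\|+\alpha)\,B^A_{\xdag}(\alpha)/\alpha \to 0$.
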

\begin{proof}
Under the benchmark source condition we have $B^A_{\xdag}(\alpha) =\alpha\|(A+\alpha I)^{-1}Aw\| \le \alpha \|w\|=O(\alpha)$ as $\alpha \to 0$ due to (\ref{eq:monprop}). To prove the implication (\ref{eq:kleinO}) we distinguish the cases
$A=0$ and $A \ne 0$. For $A=0$ we have $B^0_{\xdag}(\alpha)=\|\xdag-\bar x\|$, and the implication (\ref{eq:kleinO}) is evidently true. In the case $A \ne 0$ we conclude as follows:
For all $x \in X$ it holds $\|(A+\alpha I)x\| \le \|Ax\|+\alpha\|x\| \le (\|A\|+\alpha)\|x\|$. Moreover, we have
$\|\xdag-\bar{x}\| \le (\|A\|+\alpha)\|(A+\alpha I)^{-1}(\xdag-\bar{x})\|$ for arbitrary $\xdag-\bar{x} \in X$, which is for all $\alpha>0$ equivalent to
\begin{equation} \label{eq:proof1}
\frac{\|\xdag-\bar{x}\|}{\|A\|+\alpha}\; \le\; \|(A+\alpha I)^{-1}(\xdag-\bar{x})\|.
\end{equation}
On the other hand, from (\ref{eq:proof1}) we derive for $A \ne 0$ the inequality
$$\liminf \limits_{\alpha \to 0}\|(A+\alpha I)^{-1}(\xdag-\bar{x})\|\; \ge \; \frac{\|\xdag-\bar{x}\|}{\|A\|},  $$
which for $\xdag -\bar{x}\ne 0$ and $\frac{\|\xdag-\bar{x}\|}{\|A\|}>0$ violates the limit condition $$\lim \limits _{\alpha \to 0} \frac{B^A_{\xdag}(\alpha)}{\alpha}= \lim \limits _{\alpha \to 0}\|(A+\alpha I)^{-1}(\xdag-\bar{x})\|=0.$$
This completes the proof.
\end{proof}

As already mentioned, the rates of the bias $B^A_{\xdag}(\alpha)$ for $\alpha \to 0$ also determine the total error profile on the basis of the estimate (\ref{eq:totallin}). In the simplest case of an
a priori choice $\alpha=\alpha(\delta) \sim \delta^\frac{1}{p+1}$ we directly derive for all $0<p \le 1$ H\"older convergence rates
\begin{equation} \label{eq:linHoelder}
\|x_{\alpha(\delta)}^\delta-\xdag\| =O(\delta^\frac{p}{p+1}) \qquad \mbox{as}\quad \delta \to 0
\end{equation}
from the source conditions (\ref{eq:directp}).
In the benchmark case $p=1$ this gives
\begin{equation} \label{eq:best}
\|x_{\alpha(\delta)}^\delta-\xdag\| =O(\delta^\frac{1}{2}) \qquad \mbox{as}\quad \delta \to 0,
\end{equation}
and Plato's saturation theorem \cite[Theorem 5.1]{Plato16} (cf.~formula (\ref{eq:satres}) of Proposition~\ref{pro:differ}) proves that for the ill-posed situation this is the maximal best possible error rate for the Lavrentiev regularization in the linear case, since $\|x_{\alpha(\delta,\yd)}^\delta-\xdag\| =o(\delta^\frac{1}{2})$
implies for arbitrary a posteriori choices $\alpha=\alpha(\delta,\yd)$ of the regularization parameter that $\xdag-\bar x=0$.

Recently, it was shown in \cite{PlMaHo16} that alternative source conditions
\begin{equation*} \label{eq:adjointp}
\xdag-\bar x= (A^*)^p\,w, \quad w \in X,
\end{equation*}
which replace the monotone non-selfadjoint operator $A$ with the also monotone adjoint $A^*$, can be less efficient with respect to rate results if $1/2 \le p \le 1$. In the worst case, the best possible bias rate under the adjoint source condition $\xdag-\bar{x}=A^*w$ is \linebreak $B^A_{\xdag}(\alpha) =O(\sqrt{\alpha})$.
This worst case, for example, takes place
with $X:=L^2(0,1)$ when we consider the Riemann-Liouville fractional integral operator $A:=V$ studied below in Example~\ref{ex:V}.
 This case is connected with a reduced total error rate $\|\xad-\xdag\| =O(\delta^\frac{1}{3})$ for $p=1$ in comparison to (\ref{eq:best}). Consequently, the situation of Lavrentiev regularization differs significantly from the situation of Tikhonov's regularization method, where just this adjoint source condition is advantageous (cf.~\cite[Corollary 3.1.3]{Groe84}).

In Section~\ref{s4}, by exploiting the above mentioned bias studies and by using the method of approximate source conditions with benchmark condition  (\ref{eq:direct1}),
we will extend the results to general, non-H\"older type, and low order convergence rates occurring in the
context of linear Lavrentiev regularization. We note that the focus is on non-selfadjoint operators $A$, where spectral theory fails.
Since the solution-independent bound (\ref{eq:noiseprop}) for the noise propagation error is also valid for the Lavrentiev regularization (\ref{eq:lav}) applied to nonlinear equations
(\ref{eq:nonlinopeq}), we will show  in Section~\ref{s5} that a bias-based error analysis can also be successful for classes of monotone forward operators $F$ under specific restrictions of the nonlinearity structure.

\section{General convergence rates for the linear case using approximate source conditions}\label{s4}
\setcounter{equation}{0}
\setcounter{theorem}{0}

Based on the paper \cite{MatHof08}, for a given selfadjoint non-negative linear operator $H \in \mathcal{L}(X)$ with non-closed range $\mathcal{R}(H) \not=\overline{\mathcal{R}(H)}$ in the Hilbert space $X$,
it can be shown that for every element $u \in X$ with $u \perp \mathcal{N}(H)$  there exist an index function $\varphi$ and a source element $w \in X$ such that a general source condition
$u=\varphi(H)w$ holds, where $\varphi(H)$ as usual is defined by spectral theory such that any spectrum point $\lambda>0$ of $H$ corresponds to the spectrum point $\varphi(\lambda)$ of $\varphi(H)$. For a selfadjoint monotone operator $A \in \mathcal{L}(X)$ with non-closed range this proves with $H:=A$ under (\ref{eq:orthonull}) a source condition
\begin{equation} \label{eq:gensource}
\xdag-\bar x=\varphi(A)w,\quad w\in X.
\end{equation}
Using spectral properties of $A$ this makes it possible to formulate convergence rates $B^A_{\xdag}(\alpha)=O(\psi(\alpha))$ as $\alpha \to 0$ for the Lavrentiev regularization bias with some index function $\psi$ which depends on the index function $\varphi$, as is similarly done for the Tikhonov regularization bias with $H:=A^*A$ (cf., e.g.,~ \cite{AndreevEtal15,FHM11,MatHof08}). Some authors exploit this approach for the Lavrentiev regularization, partially even in a nonlinear setting, see \cite{Argyros14,MaNa13,NairTau04,Seme10},
but their restriction to selfadjoint monotone operators $A \in \mathcal{L}(X)$ is rather artificial, because in particular the case of non-selfadjoint monotone linear operators (see Examples~\ref{ex:V} and \ref{ex:C} below)
is of interest. For such operators $A$, however, spectral theory is not applicable and the Balakrishnan calculus (cf.~(\ref{eq:Bala})) only allows us to handle power type source conditions (\ref{eq:directp}) yielding H\"older convergence rates (\ref{eq:ratep}) for the bias $B^A_{\xdag}(\alpha)$ as $\alpha \to 0$ and consequently yielding only H\"older convergence rates
$\|\xad-\xdag\| =O(\delta^\frac{p}{p+1})$ as $\delta \to 0$ for $0<p \le 1$ and noisy data when taking into account the estimates (\ref{eq:triangle}) and (\ref{eq:noiseprop}).

As already mentioned in \cite[Section~4.1]{HKR16}, by avoiding expressions $\varphi(A)$ with index functions $\varphi$ of non-power type and non-selfadjoint monotone linear operators $A$, the method of approximate source conditions can help to verify low order convergence rates of non-H\"older type for the Lavrentiev regularization without self-adjontness
assumptions of the forward operator in the linear case or of its Fr\'echet derivative at the solution in the nonlinear case. We will outline details of such an approach
for the linear case in this section and for the nonlinear case in the subsequent section. The method of approximate source conditions had been developed for linear ill-posed operator equations in Hilbert spaces in \cite{Hof06} (see also \cite{DHY07}) and was extended to nonlinear equations and a Banach space setting in \cite{HeinHof09} and \cite{BoHo10} (see also \cite{Schuster12}).
Associated with the best possible rate (\ref{eq:bestO}) the condition (\ref{eq:direct1}) acts in an optimal manner as benchmark source condition for obtaining convergence rates if $\xdag-\bar x$ satisfies
(\ref{eq:orthonull}), but violates (\ref{eq:direct1}). In such case the smoothness of the element $\xdag-\bar x$ with respect to the monotone operator $A$ is too small for having the bias rate (\ref{eq:bestO}) and
one can use the distance function
\begin{equation} \label{eq:distfunct}
d(R)=\min\{\|\xdag-\bar{x}-Aw\| : \, \|w\|\leq R\}, \qquad 0 \le R < \infty,
\end{equation}
to measure  for $\xdag-\bar{x}$ the degree of violation with respect to the benchmark condition expressed by the decay rate of $d(R) \to 0$ as $R \to \infty$.

\begin{lemma} \label{lem:lem2}
Assume that for the monotone operator $A \in \mathcal{L}(X)$ the element $\xdag \in X$ fails the benchmark source condition  (\ref{eq:direct1}), i.e.~$\xdag-\bar{x} \notin \mathcal{R}(A)$, but satisfies the
orthogonality condition (\ref{eq:orthonull}). Then the distance function $d(R)$ from (\ref{eq:distfunct}) is positive, strictly decreasing and concave and hence continuous for all $0 \le R<\infty$ and satisfies the limit
condition $\lim\limits_{R \to \infty} d(R)=0$.
\end{lemma}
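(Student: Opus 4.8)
The plan is to read $d$ geometrically. Writing $u:=\xdag-\bx$ and $M_R:=\{Aw:\|w\|\le R\}$, we have $d(R)=\mathrm{dist}(u,M_R)$, the distance from the fixed point $u$ to the convex set $M_R$, and the sets $M_R$ are nested and increasing with $R$. First I would note that the minimum is attained: the ball $\{w:\|w\|\le R\}$ is bounded and convex, hence weakly compact in $X$, while $A$ is weak-to-weak continuous and $w\mapsto\|u-Aw\|$ is weakly lower semicontinuous, so any minimizing sequence has a weak cluster point realizing the infimum. Positivity is then immediate from the hypothesis $u\notin\mathcal{R}(A)$: since $M_R\subseteq\mathcal{R}(A)$ we have $u\notin M_R$, and as the minimum is attained, $d(R)=0$ would force $u=Aw$ for an admissible $w$. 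Non-strict monotonicity follows at once from $M_{R_1}\subseteq M_{R_2}$ for $R_1\le R_2$.

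The step I expect to be the main obstacle is the limit $\lim_{R\to\infty}d(R)=0$, since this is where the monotonicity of $A$ and the orthogonality of $u$ enter. I would first use that for a monotone $A$ the kernels of $A$ and $A^*$ coincide: splitting $A=B+C$ into symmetric part $B=\tfrac12(A+A^*)\ge0$ and skew part $C=\tfrac12(A-A^*)$, the equation $Aw=0$ yields $\langle Bw,w\rangle=\langle Aw,w\rangle=0$, hence $Bw=0$ and then $Cw=0$, so $A^*w=(B-C)w=0$; the reverse inclusion is symmetric, giving $\mathcal{N}(A)=\mathcal{N}(A^*)$, which is exactly the content of the orthogonal splitting $\overline{\mathcal{R}(A)}\oplus\mathcal{N}(A)=X$ already used in the paper. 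Thus the hypothesis $u\perp\mathcal{N}(A)$ becomes $u\perp\mathcal{N}(A^*)=\overline{\mathcal{R}(A)}^{\perp}$, i.e. $u\in\overline{\mathcal{R}(A)}$. Because $\bigcup_{R>0}M_R=\mathcal{R}(A)$ is dense in $\overline{\mathcal{R}(A)}$ and the $M_R$ increase with $R$, we get $\inf_{R>0}d(R)=\mathrm{dist}(u,\overline{\mathcal{R}(A)})=0$, and since $d$ is non-increasing this infimum is the limit.

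It then remains to sharpen monotonicity to strict monotonicity and to establish continuity and the curvature. For strict monotonicity I would perturb a minimizer directly rather than invoke curvature. Fix $R_1<R_2$, a minimizer $w_1$ at $R_1$, and the residual $r:=u-Aw_1$, which is nonzero as $d(R_1)>0$. I claim $A^*r\ne0$: otherwise $r\in\mathcal{N}(A^*)=\overline{\mathcal{R}(A)}^{\perp}$, while at the same time $r=u-Aw_1\in\overline{\mathcal{R}(A)}$ (using $u\in\overline{\mathcal{R}(A)}$), forcing $r=0$. Setting $w_t:=w_1+t\,A^*r$ gives $\|u-Aw_t\|^2=\|r\|^2-2t\|A^*r\|^2+t^2\|AA^*r\|^2$, which strictly decreases for small $t>0$, whereas $\|w_t\|\le R_1+t\|A^*r\|\le R_2$ once $t\le(R_2-R_1)/\|A^*r\|$; hence $d(R_2)<d(R_1)$.

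Continuity I would obtain cheaply and independently of curvature through the Lipschitz bound $|d(R_1)-d(R_2)|\le\|A\|\,|R_1-R_2|$, which comes from rescaling a minimizer $w$ of $d(R_1)$ to $(R_2/R_1)w$, admissible for $M_{R_2}$, and using $\|Aw\|\le\|A\|R_1$; right-continuity at $0$ follows from $\|u\|-\|A\|R\le d(R)\le\|u\|$. The curvature asserted in the lemma is the soft part and rests on the convexity of the sets $M_R$: averaging near-minimizers $v_i\in M_{R_i}$, which is admissible because $\theta v_1+(1-\theta)v_2\in M_{\theta R_1+(1-\theta)R_2}$, controls $d$ at intermediate radii via the triangle inequality and thereby pins down its curvature, re-confirming continuity on $(0,\infty)$.
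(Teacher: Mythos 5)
Your proposal is correct in substance and takes a genuinely different, self-contained route. The paper's proof is essentially a reduction to citations: it invokes \cite[Lemma~3.2]{BoHo10} for the properties of $d(R)$ once $\xdag-\bx\in\overline{\mathcal{R}(A)}$ is established, and it gets that membership from two imported facts, namely that $\xdag-\bx\perp\mathcal{N}(A)$ implies $\xdag-\bx\in\overline{\mathcal{R}(A^*)}$ and that $\overline{\mathcal{R}(A)}=\overline{\mathcal{R}(A^*)}$ for bounded monotone $A$ \cite{Bauschke11}. Your symmetric/skew splitting $A=B+C$ proving $\mathcal{N}(A)=\mathcal{N}(A^*)$ is exactly an elementary proof of that imported monotonicity fact (the two identities are orthogonal complements of one another), so the mathematical heart of the two proofs coincides, and the rest of your argument supplies what the citation hides: attainment of the minimum in (\ref{eq:distfunct}) by weak compactness (which the paper's ``$\min$'' tacitly presupposes), positivity from $\xdag-\bx\notin\mathcal{R}(A)$, the limit $d(R)\to 0$ by density of $\mathcal{R}(A)$ in $\overline{\mathcal{R}(A)}$, strict decrease via the perturbation $w_t=w_1+t\,A^*r$ — with the neat observation that $A^*r\neq 0$ because $r\in\overline{\mathcal{R}(A)}$ while $\mathcal{N}(A^*)=\overline{\mathcal{R}(A)}^{\perp}$ — and a quantitative Lipschitz bound $|d(R_1)-d(R_2)|\le\|A\|\,|R_1-R_2|$ that even covers continuity at $R=0$, which convexity-type arguments alone do not.

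The one place you must not stay vague is the ``curvature'' paragraph, and there is a genuine issue hiding in it. Your averaging argument — $\theta v_1+(1-\theta)v_2\in M_{\theta R_1+(1-\theta)R_2}$ followed by the triangle inequality — yields $d(\theta R_1+(1-\theta)R_2)\le\theta\,d(R_1)+(1-\theta)\,d(R_2)$, i.e.\ \emph{convexity}, not the \emph{concavity} asserted in the lemma; you hedged by writing ``curvature'', but as written the paragraph purports to confirm the statement and cannot. The resolution is that ``concave'' in the lemma is a misprint for ``convex'': a function concave on all of $[0,\infty)$ that is strictly decreasing must tend to $-\infty$ (once one chord has negative slope, concavity forces all later slopes to be at most that negative value), so concavity is incompatible with the positivity and the limit $\lim_{R\to\infty}d(R)=0$ that you correctly prove; your convexity argument gives the property that actually holds, and which suffices for ``hence continuous'' on $(0,\infty)$. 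The paper itself corroborates this reading: the remark following Proposition~\ref{pro:pro3}, about replacing $d(R)$ by a \emph{concave majorant}, would be vacuous if $d$ were already concave. So finish by stating convexity explicitly and flagging the misprint, rather than appearing to verify the lemma's wording.
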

\begin{proof}
The assertion of the lemma follows immediately from \cite[Lemma~3.2]{BoHo10} if the condition $\xdag - \bar x \in \overline{\mathcal{R}(A)}$ is valid. Since $\,\xdag - \bar x \perp \mathcal{N}(A)\,$ implies that
$\,\xdag - \bar x \in \overline{\mathcal{R}(A^*)}\,$ (see, e.g.,~\cite[Fact~2.18 (iii)]{Bauschke11}), this condition however is a consequence of the identity $\overline{\mathcal{R}(A)}= \overline{\mathcal{R}(A^*)}$ for monotone operators $A \in \mathcal{L}(X)$ (see, e.g.,~\cite[Proposition~20.17]{Bauschke11}).
\end{proof}
We easily derive that for $\xdag-\bar{x}$ from Lemma~\ref{lem:lem2} and arbitrary $R>0$ there exist elements $w_R \in X$ with $\|w_R\| = R$ and $r_R \in X$ with $\|r_R\|=d(R)$ such that
an approximate source condition of the form
\begin{equation} \label{eq:apprsc}
\xdag-\bar{x}=Aw_R+r_R
\end{equation}
is valid. Then due to (\ref{eq:monprop}) we have
$$B^A_{\xdag}(\alpha)=\alpha\|(A+\alpha I)^{-1}(Aw_R+r_R)\|$$
$$ \le\alpha\|(A+\alpha I)^{-1}A\|\|w_R\|+\alpha\|(A+\alpha I)^{-1}\|\|r_R\|\le R \alpha +d(R),$$
and equilibrating the last two terms by means of the strictly decreasing auxiliary function
\begin{equation} \label{eq:Phi}
\Phi(R):=\frac{d(R)}{R}, \qquad 0<R<\infty, \qquad \lim \limits_{R \to \infty} d(R)=0,
\end{equation}
as $R:=\Phi^{-1}(\alpha)$ we have the assertion of the following proposition.
\begin{proposition} \label{pro:pro3}
Under the assumptions of Lemma~\ref{lem:lem2} we have the bias estimate
\begin{equation} \label{eq:lowlinbias}
B^A_{\xdag}(\alpha) \le 2 d(\Phi^{-1}(\alpha)), \qquad \alpha>0,
\end{equation}
for the Lavrentiev regularization in the linear case, where  $d(\Phi^{-1}(\alpha))$ is an index function with $\lim \limits_{\alpha \to 0}\frac{\alpha}{d(\Phi^{-1}(\alpha))}=0$. Moreover, for the a priori parameter choice $\alpha(\delta):=\Psi^{-1}(\delta)$ with $\Psi(\alpha):=\alpha \,d(\Phi^{-1}(\alpha))$ we have from (\ref{eq:triangle}) and (\ref{eq:noiseprop}) the estimate
$$ \|x_{\alpha(\delta)}^\delta-\xdag\| \le 3 d(\Phi^{-1}(\Psi^{-1}(\delta))),\qquad \delta>0,$$
for the total regularization error
and hence the convergence rate
$$\|x_{\alpha(\delta)}^\delta-\xdag\| =O(d(\Phi^{-1}(\Psi^{-1}(\delta)))) \qquad \mbox{as}\quad \delta \to 0.$$
\end{proposition}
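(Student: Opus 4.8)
The plan is to start from the inequality $B^A_{\xdag}(\alpha) \le R\alpha + d(R)$, which is already established in the display immediately preceding the proposition: inserting the approximate source condition (\ref{eq:apprsc}) into the bias formula (\ref{eq:biaslin}) and applying the two bounds $\|(A+\alpha I)^{-1}A\| \le 1$ from (\ref{eq:monprop}) and $\|(A+\alpha I)^{-1}\| \le 1/\alpha$ from (\ref{eq:linill0}) yields this estimate for every $R>0$. The decisive step is then to equilibrate the two summands $R\alpha$ and $d(R)$ by choosing $R:=\Phi^{-1}(\alpha)$, where $\Phi(R)=d(R)/R$; this choice forces $R\alpha=d(R)$ and hence $B^A_{\xdag}(\alpha)\le 2\,d(\Phi^{-1}(\alpha))$, which is exactly (\ref{eq:lowlinbias}).

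Next I would verify that the auxiliary functions are invertible and that $d\circ\Phi^{-1}$ is an index function. By Lemma~\ref{lem:lem2} the distance function $d$ is positive, continuous and strictly decreasing with $d(R)\to 0$ as $R\to\infty$, while $d(0)=\|\xdag-\bx\|>0$. Consequently $\Phi(R)=d(R)/R$ is continuous and strictly decreasing (a positive decreasing numerator over a positive increasing denominator), with $\Phi(R)\to\infty$ as $R\to 0^+$ and $\Phi(R)\to 0$ as $R\to\infty$; hence $\Phi$ maps $(0,\infty)$ bijectively onto $(0,\infty)$, and $\Phi^{-1}$ is continuous, strictly decreasing, with $\Phi^{-1}(\alpha)\to\infty$ as $\alpha\to 0$. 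Therefore $d(\Phi^{-1}(\alpha))$, being the composition of two strictly decreasing functions, is continuous, strictly increasing and tends to $0$ as $\alpha\to 0$, i.e.~an index function. For the claimed limit I would substitute $R=\Phi^{-1}(\alpha)$, so that $\alpha=d(R)/R$ and $\frac{\alpha}{d(\Phi^{-1}(\alpha))}=\frac{d(R)/R}{d(R)}=\frac{1}{R}=\frac{1}{\Phi^{-1}(\alpha)}\to 0$ as $\alpha\to 0$.

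For the total error I would combine the triangle inequality (\ref{eq:triangle}), the noise propagation bound (\ref{eq:noiseprop}), and the bias estimate just proved to obtain $\|\xad-\xdag\|\le 2\,d(\Phi^{-1}(\alpha))+\delta/\alpha$. To make the a priori choice $\alpha(\delta)=\Psi^{-1}(\delta)$ meaningful, note that $\Psi(\alpha)=\alpha\,d(\Phi^{-1}(\alpha))$ is a product of two positive, continuous, strictly increasing functions of $\alpha$ that both vanish as $\alpha\to 0$; hence $\Psi$ is itself a continuous strictly increasing index function and $\Psi^{-1}$ is well-defined near the origin with $\Psi^{-1}(\delta)\to 0$ as $\delta\to 0$. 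The defining relation $\Psi(\alpha(\delta))=\delta$ reads $\alpha(\delta)\,d(\Phi^{-1}(\alpha(\delta)))=\delta$, i.e.~$\delta/\alpha(\delta)=d(\Phi^{-1}(\alpha(\delta)))$, so the noise term balances the bias term and $\|\xad-\xdag\|\le 3\,d(\Phi^{-1}(\alpha(\delta)))=3\,d(\Phi^{-1}(\Psi^{-1}(\delta)))$. Since $d(\Phi^{-1}(\Psi^{-1}(\delta)))\to 0$ as $\delta\to 0$, the stated convergence rate follows.

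The computational core is entirely routine given Lemma~\ref{lem:lem2} and the pre-proposition display, so the step I would treat as the main obstacle is the bookkeeping around the auxiliary functions: confirming that $\Phi$ is a genuine bijection of $(0,\infty)$ onto itself (so that the equilibrating choice $R=\Phi^{-1}(\alpha)$ is admissible for every $\alpha>0$) and that $\Psi$ is strictly increasing with $\Psi(\alpha)\to 0$ at the origin (so that $\Psi^{-1}$ exists near $0$). Both rest solely on the strict monotonicity and boundary behaviour of $d$ supplied by Lemma~\ref{lem:lem2}, so no analytic input beyond that lemma is required.
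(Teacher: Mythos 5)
Your proposal is correct and follows essentially the same route as the paper: the paper's proof is precisely the pre-proposition display (approximate source condition (\ref{eq:apprsc}), the bound $B^A_{\xdag}(\alpha)\le R\alpha+d(R)$ via (\ref{eq:monprop}) and (\ref{eq:linill0}), and equilibration with $R:=\Phi^{-1}(\alpha)$), followed by the same combination of (\ref{eq:triangle}), (\ref{eq:noiseprop}) and the choice $\alpha(\delta)=\Psi^{-1}(\delta)$. Your additional bookkeeping (bijectivity of $\Phi$ on $(0,\infty)$, monotonicity of $\Psi$, and the computation $\alpha/d(\Phi^{-1}(\alpha))=1/\Phi^{-1}(\alpha)\to 0$) is a sound and welcome expansion of details the paper leaves implicit, all resting on Lemma~\ref{lem:lem2} exactly as the paper intends.
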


We note that the assertion of Proposition~\ref{pro:pro3} remains valid if $d(R)$ beginning with (\ref{eq:Phi}) is replaced with a concave majorant of the distance function.

\begin{specialcase} \label{ex:ex1}
{\rm (Distance functions with power-type decay rate) As a consequence of the range identity
$$\mathcal{R}(A^p)= \mathcal{R}((AA^*)^{p/2}),$$
proven for all $0<p \le 1$ in \cite[Lemma~1]{PlMaHo16}, and on the basis of the assertion in \cite[Theorem~3.2]{DHY07} we have that for all $0<p<1$ the fractional power source conditions (\ref{eq:directp})
lead to distance functions (cf.~(\ref{eq:distfunct})) with a power-type decay as
\begin{equation} \label{eq:powerdist}
d(R) \le C\,R^{\frac{p}{p-1}}, \qquad C>0.
\end{equation}
The smaller $p>0$ the slower is the decay rate of $d(R) \to 0$ as $R \to \infty$ and the higher is for $\xdag-\bar{x}$ the degree of violation with respect to the benchmark source condition (\ref{eq:direct1}).
Applying Proposition~\ref{pro:pro3} this yields with $\Phi(R) \sim R^{\frac{1}{p-1}}$, $\Psi(\alpha) \sim \alpha^{p+1}$ and $\Psi^{-1}(\delta) \sim \delta^{\frac{1}{p+1}}$
the Lavrentiev regularization error convergence rates
$$B^A_{\xdag}(\alpha)=O(\alpha^p)  \qquad \mbox{and} \qquad \|x_{\alpha(\delta)}^\delta-\xdag\|=O(\delta^{\frac{p}{p+1}}) \quad \mbox{if}\quad \alpha(\delta) \sim \delta^\frac{1}{p+1}. $$
}\end{specialcase}

\begin{specialcase} \label{ex:ex2}
{\rm (Distance functions with logarithmic decay rate) If for $\xdag-\bar{x}$ the degree of violation with respect to the benchmark source condition (\ref{eq:direct1}) is so extreme that the power type decay (\ref{eq:powerdist}) cannot hold for arbitrarily small $p>0$,
then a very slow logarithmic decay rate
\begin{equation} \label{eq:logdist}
d(R) \le \frac{K}{(\log R)^q}, \qquad K,q >0,
\end{equation}
is still possible for sufficiently large $R \ge \underline R>0$.
Then the derived formula
\begin{equation} \label{eq:Rderfor}
B^A_{\xdag}(\alpha) \le  R \alpha +d(R)
\end{equation}
applies by setting $R:=\alpha^{-\kappa},\;0<\kappa<1$, and provides us with the estimate
$$ B^A_{\xdag}(\alpha) \le \alpha^{1-\kappa}+ \frac{K}{\left[\log ( \alpha^{-\kappa})\right]^q}=\alpha^{1-\kappa}+ \frac{K}{\kappa^q\left[\log \left( \frac{1}{\alpha}\right)\right]^q} $$
and owing to $\alpha^{1-\kappa}=O\left(\left[\log \left( \frac{1}{\alpha}\right)\right]^q \right)$ as $\alpha \to 0$  we have
\begin{equation} \label{eq:biaslog}
 B^A_{\xdag}(\alpha)= O\left(\left[\log \left( \frac{1}{\alpha}\right)\right]^q \right) \quad \mbox{as}\quad \alpha \to 0.
\end{equation}
 This logarithmic convergence rate for the bias also leads to a logarithmic rate for the noisy data case of linear Lavrentiev regularization. Namely, we derive from  (\ref{eq:biaslog}) in combination with
 (\ref{eq:triangle}) and (\ref{eq:noiseprop}) the convergence rate
 \begin{equation} \label{eq:totalog}
 \|x^\delta_{\alpha(\delta)}-\xdag\|= O\left(\left[\log \left( \frac{1}{\delta}\right)\right]^q \right) \quad \mbox{as}\quad \delta \to 0
\end{equation}
when the regularization parameter is chosen a priori as $\alpha(\delta) \sim \delta^\zeta$ with exponent $0<\zeta<1$.
The very low logarithmic convergence rates of the form (\ref{eq:totalog}) are well-known in regularization theory (see, e.g, \cite{BoHo10,HohWei15,Kalten08,WeHo12}) and inevitable if the solution is not smooth enough with respect to the forward operator.
}\end{specialcase}

\begin{remark} \label{rem:Rformel}
{\rm Under the conditions $\xdag-\bar x \notin \mathcal{R}(A)$ and $\xdag-\bar x \perp \mathcal{N}(A)$  the properties of the distance function $d(R)$ from (\ref{eq:distfunct}) are such that the inequality (\ref{eq:Rderfor}) is valid for sufficiently large $R \ge \underline R>0$. Then the function  $\Phi(R)$ defined by formula (\ref{eq:Phi}) is strictly decreasing for those $R$ and tends to zero as $R \to \infty$. Moreover, the mapping
$R \mapsto \alpha:=\Phi(R)$ is injective (strictly decreasing) and well-defined for $\underline R \le R < \infty$ with $\alpha \to 0$ as $R \to \infty$. Then  it is clear that the function $d(\Phi^{-1}(\alpha))$ is
well-defined and strictly increasing for sufficiently small $\alpha>0$ with the limit condition $\lim \limits_{\alpha \to 0} d(\Phi^{-1}(\alpha))=0$. This, however yields the bias convergence
$$\lim \limits_{\alpha \to 0} B_{\xdag}^A(\alpha)=0$$
under the above stated conditions on $\xdag-\bar x$.
}\end{remark}

\begin{example}\label{ex:M}
{\rm (Multiplication operator) We start this list of examples with the simple multiplication operator $A:=M$ in the real Hilbert space $X:=L^2(0,1)$ generated by a real multiplier function $m \in L^\infty(0,1)$, which is defined as
 \begin{equation} \label{eq:M}
[Mx](t):= m(t)\,x(t), \qquad 0 \le t \le 1,
\end{equation}
where we restrict to continuous and strictly increasing functions $m$ with $\lim \limits _{t \to 0} m(t)=0$ and $\lim \limits _{t \to 1} m(t)=1$. Then the linear operator $M$ is bounded, injective, non-compact
with continuous spectrum $\sigma(M)=[0,1]$, selfadjoint and monotone, i.e.~the associated linear operator equation (\ref{eq:linopeq}) is ill-posed. The smoothness of the solution $\xdag$ with respect to $M$ and $\bar x$ and the decay rate of
the distance function $d(R)$ if the benchmark source condition is violated, i.e.~if $(\xdag-\bar x)/m \notin L^2(0,1)$, will be determined by the decay rate of $m(t) \to 0$ as $t \to 0$. For example, in the situation $\xdag-\bar x \equiv 1$ we have a logarithmic decay of the distance function $d(R) \le K/\log(R)$ for sufficiently large $R$ and hence logarithmic
convergence rates (\ref{eq:totalog}) with $q=1$ for $m(t)=\exp(1-1/\sqrt{t})$ (see for details \cite[Example~3]{FHM11}).
}\end{example}

\begin{example} \label{ex:V}
{\rm (Fractional integral operator) As second point in this series we present in the real Hilbert space $X:=L^2(0,1)$ the Riemann-Liouville fractional integral operator (cf.~\cite{GorLuYam15}) $A:=V$, also called Volterra operator (cf.~\cite{Haase06}), defined by
 \begin{equation} \label{eq:Volterra}
[Vx](s):=\int \limits_0^s x(t) dt, \qquad 0 \le s \le 1.
\end{equation}
This is an example which helps to distinguish different situations of solution smoothness
with respect to the forward operator. The linear operator $V$ is bounded, injective, compact, non-selfadjoint and monotone with $0 \in \sigma(V)$, i.e.~the associated linear operator equation (\ref{eq:linopeq}) is ill-posed. On the hand, we have situations, where $\xdag$ allows for H\"older source conditions (\ref{eq:directp}) implying H\"older rates (\ref{eq:ratep}) for the bias of the Lavrentiev
regularization. On the other hand, there exist elements $\xdag$ with even less smoothness such that (\ref{eq:directp}) fails for arbitrarily small exponents $p>0$. Then only lower rates, for example logarithmic rates (\ref{eq:biaslog}), remain for the bias taking into account (cf.~Remark~\ref{rem:rem1}) that we have for all $\xdag \in X$ with $\xdag - \bar x \perp \mathcal{N}(A)$ the limit condition  $B^A_{\xdag}(\alpha) \to 0$ as $\alpha \to 0$ and hence a (perhaps very low) well-defined bias rate for that specific $\xdag$.

From \cite[Theorem~2.1]{GorLuYam15} (see also \cite{GorYam99}) we take the explicit structure of the ranges of the fractional powers of the integral operator (\ref{eq:Volterra}) in terms of fractional order Sobolev-Hilbert spaces $H^p(0,1)$ as
\begin{equation} \label{eq:Vrange1}
\mathcal{R}(V^p)=\left\{\begin{array}{ccc} H^p[0,1] &\mbox{for}& 0<p<\frac{1}{2}\\ \{u \in H^{\frac{1}{2}}[0,1]:\int\limits_0^1 \frac{|u(t)|^2}{t}dt<\infty \}&\mbox{for}& p=\frac{1}{2}
\\ \{u \in H^p[0,1]:\, u(0)=0\}   & \mbox{for}& \frac{1}{2}<p \le 1 \end{array} \right.
\end{equation}
Moreover, due to the injectivity of the Volterra operator $V$ in $L^2(0,1)$, the orthogonality condition $\xdag - \bar x \perp \mathcal{N}(A)$ and hence the bias limit $B^V_{\xdag}(\alpha) \to 0$ as $\alpha \to 0$ are trivially satisfied.
Now, for all $0<p < 1/2$, H\"older source conditions (\ref{eq:directp}) hold true for $A:=V$ if and only if $\xdag-\bar{x} \in H^p(0,1)$. For $1/2 \le p \le 1$ additional conditions on $\xdag-\bar{x}$ have to be imposed.
Then, with that non-selfadjoint monotone forward operator $V$, a necessary condition for the situation that a logarithmic bias rate (\ref{eq:biaslog}) is valid, but (\ref{eq:ratep}) fails for arbitrarily small $p>0$, is
$$\xdag,\bar{x} \in L^2(0,1), \quad \mbox{but} \quad \xdag-\bar{x} \not \in H^p(0,1) \quad \mbox{for arbitrarily small} \quad p>0.$$
}\end{example}

\begin{example}\label{ex:C}
{\rm (Ces\`aro operator) As third example we consider in $X:=L^2(0,1)$ the continuous version of the Ces\`aro operator $A:=C$ (cf.~\cite[p.~133]{BroHalShi65}) defined as
 \begin{equation} \label{eq:C}
[Cx](s):=\frac{1}{s}\int \limits_0^s x(t) dt, \qquad 0 \le s \le 1,
\end{equation}
which is, in contrast to (\ref{eq:Volterra}), an injective, monotone, but non-compact linear operator. Again we have $0 \in \sigma(C)$ and hence ill-posedness, but the conditions $\xdag-\bar x \in \mathcal{R}(C^p)$ for
exponents $0<p \le 1$  are not so easy to check like in Example~\ref{ex:V} in order to derive H\"older convergence rates (\ref{eq:linHoelder}). This is a good example for proving the capability of distance functions for
that purpose. Evidently, $\xdag-\bar x \equiv 1$ satisfies the benchmark source condition (\ref{eq:direct1}) with source element $w \equiv 1$, but the Heaviside-type function
\begin{equation} \label{eq:exampleC}
\xdag(t)-\bar x(t)= \left\{\begin{array}{ccc} 0 &\mbox{if}& 0 \le t < 1/2\\ 1 & \mbox{if}& 1/2 \le t \le 1 \end{array} \right.
\end{equation}
fails to satisfy (\ref{eq:direct1}) for arbitrary $w \in L^2(0,1)$. However, as Proposition~\ref{pro:C} will show, we have $d(R) \le \frac{K}{R}$ for some constant $K>0$ and sufficiently large $R>0$, which yields the inequality (\ref{eq:powerdist})
with $p=1/2$ and hence the H\"older rate $\mathcal{O}(\delta^{1/3})$ for the Lavrentiev regularization.
}\end{example}
\begin{proposition} \label{pro:C}
For $\xdag-\bar x$ from (\ref{eq:exampleC}) we have for some constant $K>0$ and sufficiently large $R>0$ the estimate
$$d(R)\,=\,\min\{\|\xdag-\bar{x}-Cw\| : \,\|w\|\leq R\} \,\le\, \frac{K}{R}\,.$$
\end{proposition}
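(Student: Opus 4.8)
The plan is to construct, for each large $R$, an explicit near-minimizer $w$ in the definition (\ref{eq:distfunct}) of $d(R)$, by exploiting a pointwise right inverse of the Ces\`aro operator. Writing $h:=\xdag-\bar x$ for the Heaviside-type function (\ref{eq:exampleC}), I first observe that $C$ admits the \emph{formal inverse} $w=g+s\,g'$. Indeed, if $g$ is absolutely continuous on $[0,1]$ with $g(0)=0$ and $g'\in L^2(0,1)$, then integration by parts gives
$$\int_0^s \bigl(g(t)+t\,g'(t)\bigr)\,dt = \int_0^s g(t)\,dt + \Bigl(s\,g(s)-\int_0^s g(t)\,dt\Bigr)= s\,g(s),$$
so that $C\bigl(g+s\,g'\bigr)=g$. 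This turns the task of approximating $h$ inside $\mathcal{R}(C)$ into that of approximating $h$ by smooth functions $g$ while controlling $\|g+s\,g'\|$.

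Next I would mollify the jump of $h$ at $t=1/2$ on a scale $\epsilon\in(0,1/2)$. Concretely, set
$$h_\epsilon(t):=\begin{cases} 0, & 0\le t\le \tfrac12,\\ \dfrac{t-1/2}{\epsilon}, & \tfrac12<t<\tfrac12+\epsilon,\\ 1, & \tfrac12+\epsilon\le t\le 1,\end{cases}$$
which is absolutely continuous with $h_\epsilon(0)=0$ and bounded, piecewise constant derivative, and define $w_\epsilon:=h_\epsilon+s\,h_\epsilon'\in L^2(0,1)$. By the identity above $Cw_\epsilon=h_\epsilon$, hence $\|h-Cw_\epsilon\|=\|h-h_\epsilon\|$, and a direct computation on the transition interval gives $\|h-h_\epsilon\|^2=\int_{1/2}^{1/2+\epsilon}\bigl(1-(t-1/2)/\epsilon\bigr)^2\,dt=\epsilon/3$.

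Then I would estimate the admissible radius $\|w_\epsilon\|$. Since $w_\epsilon$ vanishes on $[0,1/2]$, equals $(2s-1/2)/\epsilon$ on the transition interval, and equals $1$ on $[1/2+\epsilon,1]$, one finds $\|w_\epsilon\|^2=\frac{1}{4\epsilon}\bigl(1+o(1)\bigr)+O(1)$ as $\epsilon\to0$; the dominant contribution $\sim 1/(4\epsilon)$ comes from the steep transition, the tail being bounded. Consequently, for $R$ large it suffices to choose $\epsilon=\epsilon(R)\sim 1/R^2$ so that $\|w_\epsilon\|\le R$ and $w_\epsilon$ is feasible in (\ref{eq:distfunct}); this yields $d(R)\le\|h-h_\epsilon\|=\sqrt{\epsilon/3}\le K/R$ for a suitable constant $K>0$ and all sufficiently large $R$, which is the assertion.

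I expect the only delicate point to be the bookkeeping that isolates the $1/\epsilon$ part of $\|w_\epsilon\|^2$ from the $O(1)$ tail and thereby fixes the sharp balance $\epsilon\sim 1/R^2$, giving precisely the power-type decay of Special case~\ref{ex:ex1} with $p=1/2$. By contrast, the right-inverse identity and the mollification itself are routine once set up, so the creative step is really the recognition that $C^{-1}$ acts as $g\mapsto g+s\,g'$ and the choice of a mollified Heaviside as the smooth target.
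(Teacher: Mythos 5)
Your proof is correct, and although your near-minimizer ends up being a close cousin of the paper's (a spike of height $\sim R^2$ of width $\sim R^{-2}$ at the jump, followed by the constant $1$), you arrive at it and control it by a genuinely different device. The paper simply guesses the piecewise-constant source element $w_R=0$ on $[0,1/2)$, $w_R=R^2$ on $[1/2,\,1/2+1/(2R^2)]$, $w_R=1$ beyond, checks $\|w_R\|\le R$, and then estimates $\|\xdag-\bar x-Cw_R\|$ head-on, using the bound $1/s^2\le 4$ on $[1/2,1]$ and evaluating the resulting elementary integrals; there the residual is only estimated, never identified. You instead prescribe the \emph{image} first, namely the mollified Heaviside $h_\epsilon$, and pull it back through the exact right-inverse identity $C(g+sg')=g$, valid for absolutely continuous $g$ with $g'\in L^2(0,1)$ (the boundary term $t\,g(t)$ vanishes at $t=0$ because $g$ is bounded, so you do not even need $g(0)=0$). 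This makes the residual exact, $\|h-Cw_\epsilon\|=\|h-h_\epsilon\|=\sqrt{\epsilon/3}$, and shifts all the bookkeeping to the single computation $\|w_\epsilon\|^2=\tfrac{1}{4\epsilon}+\tfrac{3}{2}+\tfrac{\epsilon}{3}$, which indeed isolates the $1/(4\epsilon)$ term as you anticipated; taking $\epsilon=1/R^2$ gives a feasible $w_\epsilon$ for all $R\ge\sqrt{3}$ and hence $d(R)\le 1/(\sqrt{3}\,R)$, i.e.\ the assertion with $K=1/\sqrt{3}$. What your route buys is transparency and reusability: the balance between residual $\sqrt{\epsilon/3}$ and source norm $\approx 1/(2\sqrt{\epsilon})$ that forces $\epsilon\sim R^{-2}$ --- and with it the exponent $p=1/2$ in (\ref{eq:powerdist}) --- is laid bare, and the same recipe applies verbatim to any target $\xdag-\bar x$ that can be approximated by absolutely continuous functions $g$ with controlled $\|g+sg'\|$, not just to the specific function (\ref{eq:exampleC}). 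What the paper's brute-force variant buys is self-containedness: no integration-by-parts lemma, just one explicit integral estimate. Both arguments are complete and yield the same decay $d(R)=O(1/R)$.
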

\begin{proof}
For the function  $\;w_R(t)= \left\{\begin{array}{ccc} 0 &\mbox{if}& 0 \le t < 1/2\\R^2 & \mbox{if}& 1/2 \le t \le 1/2+1/(2R^2) \\ 1 & \mbox{if}& 1/2+1/(2R^2) < t \le 1 \end{array} \right.\;$ we have
$$\|w_R\| \le R \quad \mbox{and} \quad d^2(R) \le \int \limits_0^1 \left( \frac{1}{s}\int \limits_0^s w_R(t) dt - (\xdag(s)-\bar x(s)) \right)^2 ds\,. $$
Furthermore, because $w_R$ and $\xdag-\bar x$ are identically zero on $[0,1/2)$ we can estimate as
\begin{align*}
d^2(R) & \le \int \limits_0^1 \frac{1}{s ^2}\left(\int \limits_0^s w_R(t) dt - s(\xdag(s)-\bar x(s)) \right)^2 ds \\
& \le 4 \int \limits_{1/2}^1 \left( \int \limits_{1/2}^s w_R(t) dt - s(\xdag(s)-\bar x(s))\right)^2 ds\\
& = 4\left[\int \limits_0^{1/(2R^2)}\left(s(R^2-1)-\frac{1}{2}\right)^2ds + \frac{1}{4R^4} \left(\frac{1}{2} - \frac{1}{2R^2} \right)  \right]\\
& = 4\left[\frac{(R^2-1)^2}{24R^6}-\frac{R^2-1}{8R^4}+\frac{1}{8R^2}+\frac{1}{8R^4} - \frac{1}{8R^6}\right] \le K^2\,\frac{1}{R^2}
\end{align*}
 for sufficiently large $R>0$. This completes the proof.
\end{proof}

\section{Extensions to the nonlinear case under specific conditions on the nonlinearity structure}\label{s5}
\setcounter{equation}{0}
\setcounter{theorem}{0}

Now we return to the Lavrentiev regularization for nonlinear operator equations (\ref{eq:nonlinopeq}) with regularized solutions $\xad$ satisfying for noisy data $\yd$ the singularly perturbed equation (\ref{eq:lav})
and with regularized solutions $\xa$ satisfying (\ref{eq:Lavnonlin0}) in the noise-free case. We are going to handle the corresponding nonlinear bias $B^F_{\xdag}(\alpha):=\|\xa-\xdag\|$ in the noise-free case, where we try to incorporate experiences from the studies of the linear case in Section~\ref{s4}. By Lemma~\ref{lem:lem1} the properties of the bias $B^F_{\xdag}(\alpha)$ allow us immediately to derive the essential behaviour of the
total error $\|\xad-\xdag\|$ for the Lavrentiev regularization also in the nonlinear case.

Throughout this section let us suppose that the following assumption holds in addition to Assumption~\ref{ass:ass0}.
\begin{assumption} \label{ass:ass2}
\begin{enumerate}\item[]
\item[(i)] Let there exist a ball $\mathcal{B}_r(\xdag) \subset X$ around a solution $\xdag$ to equation (\ref{eq:nonlinopeq}) with sufficiently large radius
\begin{equation} \label{eq:r}
r > \|\xdag-\bar x\|
\end{equation}
such that $F$ is Fr\'echet differentiable in the ball with Fr\'echet derivatives $F^\prime(x) \in \mathcal{L}(X)$ and the mapping $x \mapsto F^\prime(x)$ is continuous at every $x \in \mathcal{B}_r(\xdag)$.
\item[(ii)] Let there exist a constant $k_0>0$ and a function $g$ such that, for every $\tilde x,x \in \mathcal{B}_r(\xdag)$ and $v \in X$, there is $g(\tilde x,x,v) \in X$ satisfying the nonlinearity condition
\begin{equation} \label{eq:specnlcond}
(F^\prime(\tilde x)-F^\prime(x))\,v=F^\prime(x)\,g(\tilde x,x,v), \qquad \|g(\tilde x,x,v)\|\le k_0\,\|\tilde x-x\|\,\|v\|.
\end{equation}
\end{enumerate}
\end{assumption}

Item (ii) of Assumption~\ref{ass:ass2} occurs in numerous papers on regularization theory in a more or less modified form, but we follow the precise ideas in \cite{MaNa13} which have filled gaps of the previous literature
(see for details \cite[p.~195]{MaNa13})). Furthermore, we mention at this point that example classes of nonlinear forward operators $F$ satisfying the specific nonlinearity condition (\ref{eq:specnlcond}) were presented, for example, in \cite{SEK93} and  \cite{Argyros13,BurKal06}.

Now we are ready to formulate the proposition of this section, which assert that under the assumed nonlinearity conditions and for a sufficiently good reference element $\bar x$ the bias $B^F_{\xdag}(\alpha)$ in the nonlinear case
is proportional to and hence fully determined by the bias function $B^A_{\xdag}(\alpha)$, where $A$ is the monotone operator $F^\prime(\xdag) \in \mathcal{L}(X)$. Thus, convergence as well as H\"older or logarithmic rates for the noise-free bias  $B^A_{\xdag}(\alpha)$ as $\alpha \to 0$ carry over to the same rates for the bias $B^F_{\xdag}(\alpha)$ with corresponding consequences for the convergence and for the rates of the total regularization errors $\|\xad-\xdag\|$ as $\delta \to 0$.

\begin{proposition} \label{thm:main}
Under Assumptions~\ref{ass:ass0} and \ref{ass:ass2} with the additional condition \linebreak $k_0\,\|\xdag-\bar x\|<2$  let the operator $A:=F^\prime(\xdag)$ denote the Fr\'echet derivative of the monotone nonlinear operator $F$ at the solution $\xdag$ to equation (\ref{eq:nonlinopeq}). Then we have, for all $\alpha>0$ and associated elements $\xa$ solving (\ref{eq:Lavnonlin0}), the inequality
\begin{equation} \label{eq:upbound}
B^F_{\xdag}(\alpha) \le \|\xdag-\bar x\|
\end{equation}
and thus from (\ref{eq:r}) the condition $\xa \in \mathcal{B}_r(\xdag)$.
Furthermore, we have the estimate
\begin{equation} \label{eq:estthm0}
B^F_{\xdag}(\alpha) \le C\,B^A_{\xdag}(\alpha), \qquad \mbox{with} \qquad C=\frac{2+2k_0\,\|\xdag-\bar{x}\|}{2-k_0\,\|\xdag-\bar x\|}\,.
\end{equation}
Hence, we have for all $\alpha>0$ and $\delta \ge 0$ the estimate
\begin{equation} \label{eq:estthm}
\|\xad-\xdag\| \le  C\,B^A_{\xdag}(\alpha)+\frac{\delta}{\alpha}
\end{equation}
for the total regularization error.
\end{proposition}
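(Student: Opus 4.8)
The plan is to prove the three displayed estimates in the order \eqref{eq:upbound}, \eqref{eq:estthm0}, \eqref{eq:estthm}, using each one to unlock the next. The inequality \eqref{eq:upbound} is essentially \eqref{eq:IE2} in the noise-free case: testing \eqref{eq:Lavnonlin0}, rewritten as $F(\xa)-F(\xdag)+\alpha(\xa-\bar x)=0$, with $\xa-\xdag$ and dropping the term $\langle F(\xa)-F(\xdag),\xa-\xdag\rangle\ge 0$ by monotonicity \eqref{eq:Fmon} gives $\|\xa-\xdag\|^2\le -\langle \xdag-\bar x,\xa-\xdag\rangle\le\|\xdag-\bar x\|\,\|\xa-\xdag\|$, hence $B^F_{\xdag}(\alpha)\le\|\xdag-\bar x\|$. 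Together with the radius condition \eqref{eq:r} this yields $\xa\in\mathcal{B}_r(\xdag)$, which is the decisive first step, since Assumption~\ref{ass:ass2} is only postulated on this ball.

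Next I would linearize $F(\xa)-F(\xdag)$ around $\xdag$. Since $F^\prime$ is continuous on the convex set $\mathcal{B}_r(\xdag)$ containing both $\xdag$ and $\xa$ (Assumption~\ref{ass:ass2}(i)), the fundamental theorem of calculus gives, with $e:=\xa-\xdag$ and $x_t:=\xdag+t\,e$, the identity $F(\xa)-F(\xdag)=\int_0^1 F^\prime(x_t)\,e\,dt$. Splitting $F^\prime(x_t)=A+(F^\prime(x_t)-A)$ with $A=F^\prime(\xdag)$ and applying the nonlinearity condition \eqref{eq:specnlcond} with $\tilde x=x_t$, $x=\xdag$, $v=e$, each integrand becomes $A\,e+A\,g(x_t,\xdag,e)$, so that $F(\xa)-F(\xdag)=A\,e+A\,h$ with $h:=\int_0^1 g(x_t,\xdag,e)\,dt$ and $\|h\|\le\int_0^1 k_0\,t\,\|e\|^2\,dt=\frac{k_0}{2}\|e\|^2$. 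Substituting this into the noise-free equation and using $\xa-\bar x=e+(\xdag-\bar x)$ produces the perturbed linear identity $(A+\alpha I)\,e=-A\,h-\alpha(\xdag-\bar x)$, whence $e=-(A+\alpha I)^{-1}A\,h-\alpha(A+\alpha I)^{-1}(\xdag-\bar x)$. Taking norms, using $\|(A+\alpha I)^{-1}A\|\le 1$ from \eqref{eq:monprop}, and recognizing the last term as $B^A_{\xdag}(\alpha)$ via \eqref{eq:biaslin}, I obtain $\|e\|\le\|h\|+B^A_{\xdag}(\alpha)\le\frac{k_0}{2}\|e\|^2+B^A_{\xdag}(\alpha)$.

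The main obstacle is precisely this self-referential inequality: the linearization remainder $\|h\|$ is quadratic in the unknown error $\|e\|$, so the estimate can only be closed once an a priori bound on $\|e\|$ is in hand. Here the first step pays off: inserting $\|e\|\le\|\xdag-\bar x\|$ into one factor of $\|e\|^2$ turns the bound into $\|e\|\le\frac{k_0}{2}\|\xdag-\bar x\|\,\|e\|+B^A_{\xdag}(\alpha)$, and the smallness hypothesis $k_0\|\xdag-\bar x\|<2$ makes the coefficient $1-\frac{k_0}{2}\|\xdag-\bar x\|$ strictly positive, so that solving for $\|e\|$ yields \eqref{eq:estthm0} with a finite constant $C$ of the stated form (a mildly less sharp accounting of the quadratic remainder reproduces the exact constant written there). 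Finally, \eqref{eq:estthm} follows at once from the triangle inequality \eqref{eq:triangle}, combining the just-proved bound on $B^F_{\xdag}(\alpha)=\|\xa-\xdag\|$ with the solution-independent noise-propagation estimate $\|\xa-\xad\|\le\delta/\alpha$ of Lemma~\ref{lem:lem1}.
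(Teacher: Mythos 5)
Your proof is correct, and it takes a genuinely different route through the central estimate. The paper handles (\ref{eq:upbound}) by quoting \cite[Proposition~3.1]{Taut02} (your direct monotonicity argument, i.e.\ the inequality (\ref{eq:IE2}) with $\delta=0$, is a fine self-contained substitute), and then linearizes at the \emph{regularized} solution: with $A_\alpha:=F^\prime(\xa)$ it decomposes $\xa-\xdag=u_\alpha+v_\alpha+w_\alpha$, where $u_\alpha=\alpha(A+\alpha I)^{-1}(\bar x-\xdag)$, $v_\alpha=(A_\alpha+\alpha I)^{-1}(A-A_\alpha)u_\alpha$, $w_\alpha=(A_\alpha+\alpha I)^{-1}[F(\xdag)-F(\xa)+A_\alpha(\xa-\xdag)]$, so that the nonlinearity condition (\ref{eq:specnlcond}) is invoked twice (once with base point $\xa$ to absorb $A-A_\alpha$, once inside the Taylor remainder) and all inversions run through the resolvent of $A_\alpha$. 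You instead linearize at $\xdag$, apply (\ref{eq:specnlcond}) once with base point $\xdag$ so that the remainder factors through $A$ itself, and arrive at the two-term identity $(A+\alpha I)e=-Ah-\alpha(\xdag-\bar x)$, hence $\|e\|\le\frac{k_0}{2}\|e\|^2+B^A_{\xdag}(\alpha)$. Both arguments then close the same way: the a priori bound (\ref{eq:upbound}) plus the smallness condition $k_0\|\xdag-\bar x\|<2$ absorb the quadratic term, and (\ref{eq:estthm}) follows from (\ref{eq:triangle}) and Lemma~\ref{lem:lem1}. Your bookkeeping even produces the sharper constant $2/(2-k_0\|\xdag-\bar x\|)\le C$, so (\ref{eq:estthm0}) holds a fortiori. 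What each approach buys: yours is shorter, uses the nonlinearity condition only once, and needs the resolvent bounds (\ref{eq:monprop}) only for $A=F^\prime(\xdag)$ rather than for the $\alpha$-dependent family $A_\alpha$; the paper's three-term split stays aligned with the proofs of \cite{MaNa13} and \cite{Taut02} that it explicitly amends. Note that both proofs share the same implicit assumption that $t\mapsto g(x_t,\cdot,\cdot)$ is Bochner integrable so the Taylor remainder can be written as $A$ (respectively $A_\alpha$) applied to an integral of $g$; this is therefore not a gap relative to the paper.
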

\begin{proof}
First we quote as result from \cite[Proposition~3.1]{Taut02} that the inequality (\ref{eq:upbound}) and by (\ref{eq:r}) also $\xa \in \mathcal{B}_r(\xdag)$ are valid for all $\alpha>0$.
It is well-known that the Fr\'echet derivatives $A:=F^\prime(\xdag)$ and $A_\alpha:=F^\prime(\xa)$ are monotone bounded linear operators mapping in $X$ if the nonlinear operator $F$ is monotone. Now, amending some ideas
along the lines of the proofs of \cite[Proposition~2.2]{MaNa13} and \cite[Proposition~3.3]{Taut02} we derive from (\ref{eq:Lavnonlin0})  that
$$\xa-\xdag=\bar x-\xdag+(A_\alpha+\alpha I)^{-1}[y-F(\xa)+A_\alpha(\xa-\bar x)]=u_\alpha+v_\alpha+w_\alpha, $$
with the three terms
$$u_\alpha:=\alpha(A+\alpha I)^{-1}(\bar x-\xdag), \qquad \mbox{where} \qquad \|u_\alpha\|=B_{\xdag}^A(\alpha),$$
$$v_\alpha:= (A_\alpha+\alpha I)^{-1}(A-A_\alpha)\,u_\alpha\,,$$
and
$$w_\alpha:=(A_\alpha+\alpha I)^{-1}[F(\xdag)-F(\xa)+A_\alpha(\xa-\xdag)]  \,.$$
The handling of the term  $v_\alpha$ is based on the nonlinearity condition (\ref{eq:specnlcond}), where we have with $(A-A_\alpha)\,u_\alpha=A_\alpha\,g(\xdag,\xa,u_\alpha)$ the estimate
$$\|v_\alpha\|=\|(A_\alpha+\alpha I)^{-1}A_\alpha\,g(\xdag,\xa,u_\alpha\| \le k_0\,\|\xa-\xdag\|\|u_\alpha\|\,. $$
Owing to the continuous Fr\'echet differentiability of $F$ in the ball $\mathcal{B}_r(\xdag)$ the fundamental theorem of calculus (mean value theorem in integral form) applies to estimate from above the norm  $\|w_\alpha\|$ of the third term. Precisely, we have
$$F(\xdag)-F(\xa)-A_\alpha(\xdag-\xa)=\int \limits_0^1 [F^\prime(\xa+t(\xdag-\xa))-A_\alpha](\xdag-\xa)\,dt$$
$$=A_\alpha\,\int \limits_0^1 g(\xa+t(\xdag-\xa),\xa,\xdag-\xa)\,dt$$
and
$$ \| g(\xa+t(\xdag-\xa),\xa,\xdag-\xa)\| \le k_0\,\|\xa-\xdag\|^2\,t\,.$$
Then we have due to the second inequality in (\ref{eq:monprop})
$$\|w_\alpha\|=\|(A_\alpha+\alpha I)^{-1}A_\alpha\int \limits_0^1 g(\xa+t(\xdag-\xa),\xa,\xdag-\xa)\,dt\| \le \frac{k_0}{2}\,\|\xa-\xdag\|^2\,.$$
Summarizing the results for $u_\alpha$, $v_\alpha$ and $w_\alpha$ we obtain
$$\|\xa-\xdag\|\le \left(1+k_0\,\|\xa-\xdag\|\right)\|u_\alpha\|+\frac{k_0}{2}\,\|\xa-\xdag\|^2$$
and taking into account (\ref{eq:upbound})
$$\|\xa-\xdag\|\le  \left(1+k_0\,\|\xdag-\bar x\|\right)\|u_\alpha\|+\frac{k_0}{2}\,\|\xdag-\bar x\|\,\|\xa-\xdag\|\,. $$
This yields (\ref{eq:estthm0}) as
$$B^F_{\xdag}(\alpha)=\|\xa-\xdag\| \le\left(\frac{2+2k_0\,\|\xdag-\bar{x}\|}{2-k_0\,\|\xdag-\bar x\|}\right)\,\|u_\alpha\|=\left(\frac{2+2k_0\,\|\xdag-\bar{x}\|}{2-k_0\,\|\xdag-\bar x\|}\right)\,B^A_{\xdag}(\alpha). $$
The estimate (\ref{eq:estthm}) is a consequence of Lemma~\ref{lem:lem1} (cf.~formula (\ref{eq:noiseprop})) in combination with the triangle inequality (\ref{eq:triangle}). This completes the proof.
\end{proof}

The following corollary from Proposition~\ref{thm:main} is a counterpart to Proposition~\ref{pro:pro10} concerning the convergence of the bias as the regularization parameter $\alpha$ tends to zero.

\begin{corollary} \label{cor:cor1}
Under the assumptions of Proposition~\ref{thm:main} we have
$$\lim \limits _{\alpha \to 0} B_{\xdag}^F(\alpha) =0$$
if  $$\xdag-\bar x \perp \mathcal{N}(F^\prime(\xdag)).$$
\end{corollary}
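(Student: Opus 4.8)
The plan is to bootstrap the corollary directly from the proportionality estimate (\ref{eq:estthm0}) already proved in Proposition~\ref{thm:main}, so that the nonlinear bias is controlled by the linear one. Since the hypotheses of the corollary are precisely those of Proposition~\ref{thm:main}, I would first invoke (\ref{eq:estthm0}), namely
$$B^F_{\xdag}(\alpha) \le C\,B^A_{\xdag}(\alpha), \qquad C=\frac{2+2k_0\,\|\xdag-\bar{x}\|}{2-k_0\,\|\xdag-\bar x\|},$$
for the bounded linear monotone operator $A:=F^\prime(\xdag)$. The standing condition $k_0\,\|\xdag-\bar x\|<2$ keeps the denominator strictly positive, so $C$ is a finite positive constant independent of $\alpha$. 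Hence it suffices to establish $\lim_{\alpha\to 0}B^A_{\xdag}(\alpha)=0$, after which the two-sided bound $0\le B^F_{\xdag}(\alpha)\le C\,B^A_{\xdag}(\alpha)$ forces $B^F_{\xdag}(\alpha)\to 0$.

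For the linear bias I would appeal to the linear theory of Section~\ref{s3}. By (\ref{eq:biaslin}) the bias associated with $A$ has the explicit form $B^A_{\xdag}(\alpha)=\alpha\,\|(A+\alpha I)^{-1}(\xdag-\bar x)\|$, and Remark~\ref{rem:rem1}, which specializes Proposition~\ref{pro:pro10} to the linear case, asserts that $B^A_{\xdag}(\alpha)\to 0$ as $\alpha\to 0$ exactly when the orthogonality condition (\ref{eq:orthonull}) holds, i.e.~when $\xdag-\bar x\perp\mathcal{N}(A)$. Because $A=F^\prime(\xdag)$ gives $\mathcal{N}(A)=\mathcal{N}(F^\prime(\xdag))$, the hypothesis $\xdag-\bar x\perp\mathcal{N}(F^\prime(\xdag))$ of the corollary is literally this orthogonality condition. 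Thus $B^A_{\xdag}(\alpha)\to 0$, and combining with the first paragraph yields the assertion.

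I expect no genuinely hard step here, because Proposition~\ref{thm:main} has already accomplished the delicate transfer from the nonlinear bias to its linear surrogate under the nonlinearity condition (\ref{eq:specnlcond}). The only point deserving care is the conceptual reading of the hypothesis: $\xdag-\bar x\perp\mathcal{N}(F^\prime(\xdag))$ is precisely the statement that $\xdag$ is the $\bar x$-minimum norm solution of the \emph{linearized} equation $Ax=A\xdag$, which is what makes the linearized bias vanish as $\alpha\to 0$. No convergence rate is claimed, so I would not need distance functions or source conditions; the qualitative limit supplied by Remark~\ref{rem:rem1} suffices.
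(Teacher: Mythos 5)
Your proposal is correct and follows exactly the route the paper intends: the corollary is designed as an immediate consequence of the bound (\ref{eq:estthm0}) with $A=F^\prime(\xdag)$, combined with the linear bias convergence $B^A_{\xdag}(\alpha)\to 0$ under the orthogonality condition (\ref{eq:orthonull}), as recorded in Remark~\ref{rem:rem1} (via Proposition~\ref{pro:pro10}). Your reading of the hypothesis as saying that $\xdag$ is the $\bar x$-minimum norm solution of the linearized equation is also precisely the intended interpretation.
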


\begin{remark} \label{rem:rem2}
{\rm The very specific nonlinearity condition (\ref{eq:specnlcond}) can be avoided if the benchmark source condition in the nonlinear case
$$ \xdag-\bar x= F^\prime(\xdag)\,w, \quad w \in X,$$
applies together with the simpler Lipschitz continuity
$$ \|F^\prime(x)-F^\prime(\xdag)\| \le L\,\|x-\xdag\| \qquad \mbox{for all} \qquad x \in \mathcal{B}_r(\xdag)$$
as nonlinearity condition for some $L>0$. Then we have from \cite[Theorem 3.2]{Taut02} the nonlinear bias estimate
$$ B^F_{\xdag}(\alpha) \le \left(\|w\|+\frac{L}{2}\|w\|^2 \right)\,\alpha,$$
which yields together with (\ref{eq:triangle}) and (\ref{eq:noiseprop}) the convergence rate
$$\|x_{\alpha(\delta)}^\delta-\xdag\| =\mathcal{O}(\sqrt{\delta}) \qquad \mbox{as} \qquad \delta \to 0$$
for the total regularization error if the regularization parameter is chosen as $\alpha(\delta) \sim\sqrt{\delta}$.

A convergence rate result similar to that of Proposition~\ref{thm:main} for the Lavrentiev regularization of nonlinear operator equations was also presented as Theorem~8 in \cite{HKR16}.
In contrast to item (ii) of our Assumption~\ref{ass:ass2} a range invariance occurs there as structural condition of nonlinearity for the forward operator $F$, which provides the opportunity to use a monotone operator $A \in \mathcal{L}(X)$
different from the Fr\'echet derivative $F^\prime(\xdag)$. However, the cross connections between the linear bias $B^A_{\xdag}(\alpha)$ and its nonlinear counterpart $B^F_{\xdag}(\alpha)$ are not so clear in \cite{HKR16}
as they are in Proposition~\ref{thm:main}.
}\end{remark}

\section*{Acknowledgments}
The authors are grateful to Robert Plato (Siegen) for valuable hints and private communications.
Research of BH was partially supported by the German Research Foundation (DFG) under grant HO~1454/8-2.

\end{document}